\documentclass[10pt,reqno]{amsart}
\usepackage{amsmath, amsthm}
\usepackage{amssymb,latexsym}
\usepackage{enumerate}

\providecommand{\R}{\mathbb{R}}
\providecommand{\Z}{\mathbb{Z}}
\providecommand{\dist}[1]{\ensuremath{\mathrm{dist}(#1)}}
\providecommand{\abs}[1]{\left|#1\right|}
\providecommand{\norm}[1]{\lVert#1\rVert}
\providecommand{\avg}[2][]{\ensuremath{\mathrm{avg}_{#1}(#2)}}
\providecommand{\grad}{\nabla}
\providecommand{\gs}{\gtrsim} \providecommand{\ls}{\lesssim}
\providecommand{\eqs}{\approx}
\providecommand{\BMO}{\text{BMO}}
\providecommand{\VMO}{\text{VMO}}

\renewcommand{\epsilon}{\varepsilon}
\renewcommand{\rho}{\varrho}

\newtheorem{theorem}{Theorem}[section]
\newtheorem{lemma}[theorem]{Lemma}
\newtheorem{corollary}[theorem]{Corollary}

\theoremstyle{definition}
\newtheorem{definition}{Definition}[section]

\theoremstyle{remark}
\newtheorem{remark}{Remark}

\title[The $L^p$ Dirichlet problem for non-divergence form operators]{The $L^p$ Dirichlet problem for second-order, \\ non-divergence form operators:\\ solvability and perturbation results}

\date{27 January 2011}
\author{Martin Dindo\v{s}}
\address{The University of Edinburgh and Maxwell Institute of Mathematical
Sciences, JCMB, The King's Buildings, Mayfield Road, Edinburgh EH9
3JZ, Scotland} \email{m.dindos@ed.ac.uk}

\author{Treven Wall}
\address{John Hopkins University, Applied Physics Laboratory and Department of Applied Mathematics and Statistics, 11100 Johns Hopkins Road, Laurel, MD 20723, USA} \email{treven.wall@jhuapl.edu}

\keywords{second order non-divergence form elliptic operators,
perturbation theorem, $L^p$ solvability, Dirichlet problem}

\thanks{2010 {\it Mathematics Subject Classification} 35J25, 42B37}
\thanks{First author supported by EPRC grant EP/F014589/1-253000}

\begin{document}

\begin{abstract}
We establish Dahlberg's perturbation theorem for non-divergence
form operators ${\mathcal L}=A\grad^2$. If ${\mathcal L}_0$ and
${\mathcal L}_1$ are two operators on a Lipschitz domain such that
the $L^p$ Dirichlet problem for the operator ${\mathcal L}_0$ is
solvable for some $p\in (1,\infty)$ and the coefficients of the
two operators are sufficiently close in the sense of Carleson
measure, then the $L^p$ Dirichlet problem for the operator
${\mathcal L}_1$ is solvable for the same $p$. This is an improvement of the
$A_\infty$ version of this result proved by Rios in \cite{R1}.
As a consequence we also improve a result from \cite{DPP}
for the $L^p$ solvability of non-divergence form operators
(Theorem \ref{T:main}) by substantially weakening the condition
required on the coefficients of the operator. The improved
condition is exactly the same one as is required for divergence
form operators ${\mathcal L}={\text{div }} A\grad$.
\end{abstract}

\maketitle

\section{Introduction}

This paper is a continuation of a long line of work, most recently
advanced in \cite{DPP}, on the solvability of the $L^p$ Dirichlet
problem for elliptic operators with rough coefficients on
Lipschitz domains with small constants. Here, we extend the
results to non-divergence form operators satisfying a certain
oscillatory Carleson condition which was left open in \cite{DPP}.

In particular, the non-divergence form results in \cite{DPP}
require a condition on the gradient of the coefficient matrix,
since their results arise from considering the divergence form
case first. They then change a non-divergence form operator to
divergence form by allowing first order terms.

Throughout this paper, the operators $\mathcal{L}$ which we
consider are second-order, linear, uniformly elliptic and in
non-divergence form. Precisely, $\mathcal{L} = a^{ij}(x)
\partial_{ij}$ (we use here and throughout the paper the usual
summation convention), where $A(x) = \left(a^{ij}(x)\right)_{i,j}$ is a
symmetric matrix with ellipticity constant $0 < \lambda < \infty$
such that for all $x,\xi \in \R^n$,
\begin{equation}\label{E:ellipticity}
 \lambda \abs{\xi}^2 \le \xi^t A(x) \xi \le \lambda^{-1} \abs{\xi}^2.
\end{equation}
We assume throughout that $n \ge 2$.

The problem under consideration is the Dirichlet problem
\begin{align}\label{E:pde}
    \mathcal{L}u &= 0 \quad \text{ in $D$} \\
    u &= g \quad \text{ on $\partial D$}, \notag
\end{align}
where $D \subset \R^n$ is a bounded Lipschitz domain.

It is a fairly difficult task to even define the notion of a
solution to the equation \eqref{E:pde}. Recall that in the
divergence form case once can use Peron's method to construct a
solution for coefficients that are merely bounded and measurable.
This is not the case in our situation. For this reason we
postulate what we mean by solving the {\it continuous Dirichlet
problem}, denoted ${\mathcal C}{\mathcal D}$, following \cite{R1} and
\cite{R2}:

\begin{definition}\label{D:CD} ({\rm Continuous Dirichlet problem, ${\mathcal C}{\mathcal
D}$}) Given an operator $\mathcal L$ we say that the continuous
Dirichlet problem is uniquely solvable in $D$ (and we say
${\mathcal C}{\mathcal D}$ holds for $\mathcal L$) if for every
continuous function $g$ on $\partial D$ there exists a unique
solution $u$ of \eqref{E:pde} such that $u\in C(\overline{D})\cap
W^{2,p}_{loc}(D)$ for some $1\le p\le \infty$.
\end{definition}

From the results in \cite{CFL}, if the coefficients $a^{ij}$ are
in VMO (see section \ref{S:basic_definitions} for the
definition) and $g \in C(\partial D)$,
then \eqref{E:pde} has a unique solution $u_g \in C(\overline{D})
\cap W^{2,p}_{loc}(D)$ for all $p$, $1 < p < \infty$. By
approximation, one can extend this result to allow for
coefficients in $\BMO_{\rho_0}$ (also defined in section \ref{S:basic_definitions}) in a restricted range
of $p$: $1 < p < p_0(\rho_0)$ (c.f. \cite{R3}).

\begin{definition}\label{D:Dp} ({\rm $L^p$ Dirichlet problem, ${\mathcal
D}_p$}). We say that the Dirichlet problem is solvable for
$\mathcal{L}$ in $L^p$ on $D$ (or that $\mathcal{D}_p$ holds for
$\mathcal{L}$ on $D$), for $1 < p < \infty$, if ${\mathcal
C}{\mathcal D}$ holds for $\mathcal L$ and there is a constant $C$
(depending only on $\mathcal{L}$, $\lambda$, $n$, $D$ and $p$)
such that for all $g \in C(\partial D)$, the ${\mathcal
C}{\mathcal D}$ solution to \eqref{E:pde} (which we shall denote
by $u_g$) satisfies
\begin{equation}\label{E:Dp}
\norm{Nu_g}_{L^p(\partial D)} \le C \norm{g}_{L^p(\partial D)},
\end{equation}
where $N$ is the non-tangential maximal operator (see below). Unless explicitly
stated, the assumed measure on $\partial D$ is $\sigma$, standard surface measure.
\end{definition}

Here the non-tangential maximal operator $N$ (when necessary,
$N_{\alpha}$) is defined as
\[ N_{\alpha}u (Q) = \sup_{x\in\Gamma_{\alpha}(Q)}\abs{u(x)},\]
where $\Gamma_{\alpha}(Q)$ denotes a truncated cone interior to
$D$ of aperture $\alpha$ based at $Q \in \partial D$, i.e.,
\[ \Gamma_{\alpha}(Q) = \{ x \in D : \abs{x - Q} \le
(1+\alpha)\delta(x)\} \cap B_{r^*}(Q).\] Throughout, $ \alpha
>\alpha^*(D)> 0$, with $\alpha^*$ and $r^* >0$ determined by the
Lipschitz character of the domain and its size. Finally,
$\delta(x) := \dist{x, \partial D}$; $B_r(x)$ denotes the ball of
radius $r$ centered at $x$.\vspace*{2mm}

In this paper we consider two fundamental questions. The first one
is whether the $L^p$ solvability can be perturbed, that is, if
${\mathcal L}_0$ and ${\mathcal L}_1$ are two operators close in
some sense, under what conditions does the solvability of the $L^p$
Dirichlet problem for one operator imply the same for the other?
This question has a long history in the case of second order
elliptic divergence form operators. For our purposes the papers
\cite{D2} and \cite{FKP} are of particular importance. Operators
with first order terms are considered in \cite{HL}.

The non-divergence form case has a considerably shorter history
since new difficult issues arise. In particular, the non-uniqueness of
so-called {\it weak solutions} causes trouble in the most general case (see \cite{Nad} and \cite{Sa}).

However, assuming, as we do, that the coefficients of the
non-divergence form operators considered have a small BMO norm,
 one can establish the existence of \emph{strong solutions} (i.e. solutions in $W^{2,p}_{loc}(D)$, c.f
\cite{CFL}). These are the solutions we consider in this paper.

The papers \cite{R1} and \cite{R2} have made very good progress in
settling the question whether results that hold in the divergence
form case extend to non-divergence form operators. In particular, these
papers show that if the elliptic measure of an operator ${\mathcal
L}_0$ is in the Muckenhaupt $A_\infty(d\sigma)$ class, then so is the
elliptic measure of an operator ${\mathcal L}_1$ under the same
assumptions as in \cite{FKP}. This implies that
$L^p$ solvability of the operator ${\mathcal L}_0$ gives $L^q$
solvability of the operator ${\mathcal L}_1$ (for $q$ potentially much larger
than $p$). The paper \cite{R2} also considers first order terms
(drift terms).

In our Theorem \ref{T:perturbation} we settle the question of whether
$q$ can be taken to be the same as $p$, and the answer is affirmative if the
coefficients of the considered operators are sufficiently close in
the sense of Carleson measure. Analogous results for divergence
form operators have been established before (\cite{D2} and \cite{FKP}).
We do not consider first order terms as \cite{R2} does;
however, this can be done, as all the necessary ingredients are
already in place. We choose not to do it here to make our
already very technical exposition more readable.

The second fundamental question we settle here is the question of
finding a broad condition on coefficients of the non-divergence
form operator that guarantee $L^p$ solvability. Again, the case of
divergence form operators serves as a model. There are two
particulary important results to mention here. In \cite{KP} it was
established that under the assumption that $t\abs{\grad A}^2$ is a
Carleson measure, the $L^p$ Dirichlet problem
is solvable. In \cite{DPP} this condition was relaxed (the
gradient is replaced by an oscillation-type condition), and
it was also shown there that given $p\in (1,\infty)$ the $L^p$
solvability depends on the norm of the Carleson measure. If the
norm is small, the $L^p$ solvability for particular $p$ holds.
Moreover, since first order terms are also considered, the results
in \cite{KP} and \cite{DPP} do apply to non-divergence form
operators, but only under the stronger gradient condition
$t\abs{\grad A}^2$.

The missing piece to prove this under the weaker
``oscillation condition'' is a strong perturbation theorem for
non-divergence form operators which we establish here. Hence
the result in \cite{DPP} is substantially improved. This is
formulated in Theorem \ref{T:main}. As previously mentioned,
the weaker $A_\infty$ version of this result is already done
in \cite{R2}.\vspace*{2mm}

The structure of this paper is as follows: section 2
contains a few definitions needed to formulate two main results,
which is done in section 3. Section 4 expands the list of
definitions and introduces a few technical preliminaries. Section
5 contains the proof the the perturbation result and section 6
is dedicated to the $L^p$ solvability under the Carleson condition on
the coefficients of our operator. Finally, sections 7 and 8 contain
the proofs of two auxiliary lemmas.

\section{Basic definitions}\label{S:basic_definitions}

Given $f \in L^1_{loc}(\R^n)$, let
\[\eta(r,x) = \eta_f(r,x) = \sup_{s \le r} \frac{1}{\abs{B_s(x)}} \int_{B_s(x)} \abs{f(y)-f_{B_s(x)}} \, dy,\]
where $f_E$ is the average value of $f$ on $E$. Then $f \in
\BMO(\R^n)$ (i.e., $f$ has \emph{bounded mean oscillation}) if
$\eta \in L^{\infty}(\R^+,\R^n)$. Moreover, $\norm{f}_{\BMO} =
\norm{\eta_f}_{L^{\infty}(\R^+, \R^n)}$.

Let $\eta(r) = \norm{\eta(r, \cdot)}_{L^{\infty}(\R^n)}$. We say
$f \in \VMO(\R^n)$ ($f$ has \emph{vanishing mean oscillation}) if
$\lim_{r\to 0^+}\eta(r) = 0$. Finally, a function $f \in
\BMO_{\rho}(\R^n)$ if $\eta(r) \in \Phi(\rho)$, where $\Phi(\rho)$
is the collection of all non-decreasing functions $\eta:\R^+ \to
\R^+$ such that there exists a $\zeta > 0$ such that $\eta(r) \le
\rho$ for all $r < \zeta$. These spaces ($\BMO$, $\VMO$,
$\BMO_{\rho}$) can be restricted to a Borel set $G$ using standard
methods.\vspace*{2mm}

The setting for our work is a Lipschitz domain $D$. A bounded,
connected domain $D \in \R^n$ is called a \emph{Lipschitz domain}
if there is a finite collection $\{(I_i, \phi_i)\}$ of right
circular cylinders $I_i$ and Lipschitz functions $\phi_i$ ($\phi_i
: \R^{n-1} \to \R$, and there is an $L > 0$ such that for all $x,y
\in \R^{n-1}$, $\abs{\phi(x)-\phi(y)}\le L\abs{x-y}$) such that
the following hold:

\begin{enumerate}[(i)]
\item  The collection of cylinders $\{I_i\}$ covers the boundary,
$\partial D$, of $D$. \item  The bases of the cylinders have
positive distance from $\partial D$. \item  Corresponding to each
pair $(I_i,  \phi_i)$, there is a coordinate system $(x,s)$ with
$x \in \R^{n-1}$, $s \in \R$ such that the $x$-axis is parallel to
the axis of $I_i$ and such that $I_i \cap D = \{(x,s): s >
\phi_i(x)\}\cap I_i$ and $I_i \cap \partial D = \{(x,s):
s=\phi_i(x)\} \cap I_i$.
\end{enumerate}

Without loss of generality, we will assume that $D$ is containted
within the unit ball centered at the origin of $\R^n$ and that $D$
contains the origin, i.e., we assume $D \subset B_1(0)$ with $0
\in D$.

For points $Q \in \partial D$ and $r > 0$, we denote the boundary
ball of radius $r$ at $Q$ by $\Delta_r(Q) = B_r(Q) \cap \partial
D$. The \emph{Carleson region} $T_r(Q)$ above $\Delta_r(Q)$ is
given by $T_r(Q) = B_r(Q) \cap D$. We say that a measure $\mu$ on
$D$ is a \emph{Carleson measure} if there is an $M < \infty$ such
that  \[\sup_{r>0, Q \in \partial
D}\frac{\mu(T_r(Q))}{\sigma(\Delta_r(Q))} = M.\]

\section{Main results}

The following perturbation theorem is modelled on Dahlberg's
theorem (Theorem 1 in \cite{D2}, re-proven as Theorem 2.18 in
\cite{FKP}).

\begin{theorem}\label{T:perturbation} Consider operators $\mathcal{L}_0$, $\mathcal{L}_1$, with
$\mathcal{L}_k = a^{ij}_k(x) \partial_{ij}$ on a Lipschitz domain
$D$, $\epsilon(x) = \left(a^{ij}_0(x) - a^{ij}_1(x)\right)_{i,j}$
and $\mathbf{a}(x) = \sup_{z \in B_{\frac{\delta(x)}{2}}(x)}
\abs{\epsilon(z)}$. Let $\lambda>0$ be the ellipticity constant of
the operator $\mathcal{L}_0$, and let
\begin{equation}
\sup_{Q \in \partial D, r > 0} \frac{1}{\sigma(\Delta_r(Q))}
\int_{T_r(Q)} \frac{\mathbf{a}^2(x)}{\delta(x)} \, dx=\epsilon_0 <
\infty.\label{E:perturbation}
\end{equation}
Assume that the $L^p$ Dirichlet problem is solvable for the operator
$\mathcal{L}_0$ with a constant $C_p>0$ in the estimate
\eqref{E:Dp} for some $1<p<\infty$.

There exist constants $\rho_0>0$ (independent of $p$) and
$M=M(p,D,\lambda,C_p,\rho)>0$ such that if $a^{ij}_0\in\BMO_{\rho}$ with $\rho<\rho_0$, and if
$\epsilon_0<M$, then the $L^p$ Dirichlet problem is solvable for
the operator $\mathcal{L}_1$.
\end{theorem}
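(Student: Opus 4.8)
\emph{Proof proposal.}
The plan is to run Dahlberg's perturbation argument in the form given by Fefferman--Kenig--Pipher, adapted to non-divergence operators. Fix $g\in C(\partial D)$ and let $u_0,u_1$ be the $\mathcal C\mathcal D$ solutions of $\mathcal L_0u_0=0$ and $\mathcal L_1u_1=0$ with boundary data $g$; the goal is $\norm{Nu_1}_{L^p(\partial D)}\ls 2C_p\norm{g}_{L^p(\partial D)}$. Two preliminaries come first. First, one must know that $\mathcal C\mathcal D$ holds for $\mathcal L_1$ and that its solutions enjoy the usual interior $W^{2,2}_{loc}$ estimates: since $\mathbf a^2/\delta$ is a Carleson measure with small norm $\epsilon_0$, an averaging argument over Whitney balls shows that $\epsilon=(a^{ij}_0-a^{ij}_1)_{i,j}$ has small $\BMO$ norm, so $a^{ij}_1\in\BMO_{\rho_1}$ with $\rho_1\ls\rho+\epsilon_0^{1/2}$, and choosing $\rho_0$ (absolute) and $M$ small enough keeps $\rho_1$ below the threshold for which the interior estimates hold with constants depending only on $n,\lambda$; the results recalled after Definition \ref{D:CD} then apply. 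Second, the argument ends with an absorption that requires $\norm{Nu_1}_{L^p}<\infty$ a priori for the given, possibly large, exponent $p$; this is obtained by first proving the theorem under the extra assumption that $\epsilon$ is supported in a fixed compact subset of $D$ --- then $\mathcal L_1=\mathcal L_0$ near $\partial D$, $u_1-u_0$ is bounded and vanishes on $\partial D$, so $\mathcal D_p$ holds for $\mathcal L_1$ qualitatively with the same $p$ --- and then removing the restriction by approximating $\mathcal L_1$ by operators whose perturbation is cut off to $\{\delta>1/k\}$, using that the constant produced below does not depend on the support and that solutions are stable under coefficient convergence in the small-$\BMO$ regime.

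For the main estimate, set $w:=u_1-u_0$, so that $\mathcal L_0w=\mathcal L_0u_1=(\mathcal L_0-\mathcal L_1)u_1=\epsilon^{ij}\partial_{ij}u_1=:F$ in $D$ and $w=0$ on $\partial D$; hence $w(X)=\int_D\mathcal G_0(X,Y)F(Y)\,dY$, where $\mathcal G_0$ is the Green's function of $\mathcal L_0$ (defined through its adjoint), the integral converging absolutely once $\epsilon$ is compactly supported. Since $\abs{\epsilon(Y)}\le\mathbf a(Y)$, and since interior $W^{2,2}$ estimates for the small-$\BMO$ operator $\mathcal L_1$ give, for each Whitney cube $Q_j$ of side $\ell_j\eqs\dist{Q_j,\partial D}$, the bound $\int_{Q_j}\abs{\grad^2u_1}\ls\ell_j^{n-2}\,\mathcal A_j u_1$ --- where $\mathcal A_j u_1$ is the $L^2$-average of $u_1$ over a fixed dilate of $Q_j$, and $\mathcal A_j u_1\ls Nu_1(Q)$ for every $Q$ in the surface ball $\Delta_j$ of radius $\eqs\ell_j$ centered at the boundary projection of $Q_j$ --- one obtains the Whitney-scale bound $\int_{Q_j}\abs F\ls\mathbf a_j\,\ell_j^{n-2}\,\mathcal A_j u_1$ with $\mathbf a_j:=\sup_{Q_j}\mathbf a$. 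The supremum in the definition of $\mathbf a$ is what makes these estimates, and the Carleson bookkeeping below, insensitive to the harmless enlargements of cubes that occur.

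The heart of the proof is to convert this representation into $\norm{Nw}_{L^p(\partial D)}\ls C'(p,D,\lambda,C_p,\rho)\,\epsilon_0^{1/2}\,\norm{Nu_1}_{L^p(\partial D)}$. Using the comparison $\mathcal G_0(X,A_r(Q))\eqs r^{2-n}\,\omega_0^X(\Delta_r(Q))$ between the Green's function and the elliptic measure of $\mathcal L_0$ --- valid for non-divergence $\mathcal L_0$ with small-$\BMO$ coefficients, $A_r(Q)$ a corkscrew point of $\Delta_r(Q)$; this comparison, the interior estimate, and an $N$--$S$ estimate are the natural candidates for the auxiliary lemmas of \S\S 7--8 --- one replaces the kernel by elliptic-measure data and bounds $\abs{w(X)}$, for $X\in\Gamma(Q)$, by $\sum_j\omega_0^X(\Delta_j)\,\mathbf a_j\,\mathcal A_j u_1$ plus a harmless local contribution. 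A Cauchy--Schwarz splitting of this sum, using the boundary decay of $\mathcal G_0$ to match the Carleson hypothesis on $\mathbf a^2/\delta$ (of norm $\epsilon_0$) against a second-order square function of $u_1$ --- whose $L^p$ norm is $\ls\norm{Nu_1}_{L^p}$ by the $N$--$S$ estimate --- together with a stopping-time organization of the cubes and the solvability of $\mathcal D_p$ for $\mathcal L_0$ (equivalently, a scale-invariant reverse H\"older bound with exponent $p'$ for the $\mathcal L_0$-Poisson kernel, with constant controlled by $C_p$), yields the claimed estimate, the power $\tfrac12$ arising from Cauchy--Schwarz. Then $\norm{Nu_1}_{L^p}\le\norm{Nu_0}_{L^p}+\norm{Nw}_{L^p}\le C_p\norm{g}_{L^p}+C'\epsilon_0^{1/2}\norm{Nu_1}_{L^p}$, and choosing $M$ with $C'M^{1/2}\le\tfrac12$ lets us absorb the last term, giving $\mathcal D_p$ for $\mathcal L_1$.

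I expect the main obstacle to be the third step. Unlike the divergence-form case there is no energy identity to exploit, so the passage from the Green's representation of $w$ to an $L^p(\partial D)$ estimate rests entirely on (a) sharp, \emph{scale-invariant} interior $W^{2,q}$ regularity for $\mathcal L_1$-solutions --- exactly where the small-$\BMO$ hypothesis and the threshold $\rho_0$ enter --- and (b) the Green's-function/elliptic-measure comparison and the second-order $N$--$S$ estimate for non-divergence operators, neither of which is routine. A secondary but genuine difficulty, already noted, is the a priori finiteness of $\norm{Nu_1}_{L^p}$ for the possibly large exponent $p$, which forces the compact-support reduction and the limiting argument above.
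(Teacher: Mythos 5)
Your overall frame (Green's representation for $w=u_1-u_0$, using Carleson smallness to get $\mathcal{C}\mathcal{D}$ and small BMO for $\mathcal L_1$, a final absorption) is in the spirit of the paper, but the quantitative core of your plan has a genuine circularity. You propose to prove $\norm{Nw}_{L^p(d\sigma)}\ls \epsilon_0^{1/2}\norm{Nu_1}_{L^p(d\sigma)}$ by pairing the Carleson measure $\mathbf a^2\,dx/\delta$ against a second-order square function of $u_1$, ``whose $L^p$ norm is $\ls\norm{Nu_1}_{L^p}$ by the $N$--$S$ estimate.'' But $u_1$ solves $\mathcal L_1u_1=0$, and an estimate of the form $\norm{A(u_1)}_{L^p(\partial D,\,d\sigma)}\ls\norm{Nu_1}_{L^p(\partial D,\,d\sigma)}$ for $\mathcal L_1$-solutions against \emph{surface measure} is precisely the kind of information (essentially $\omega_1\in A_\infty(d\sigma)$, via the Dahlberg--Jerison--Kenig/Escauriaza--Kenig equivalences, which hold a priori only in $L^q(d\omega_1)$ and transfer to $d\sigma$ only once $\omega_1\in A_\infty(d\sigma)$ is known) that the theorem is supposed to produce; you never invoke anything (e.g. Rios's $A_\infty$ perturbation theorem) that would supply it. The paper avoids exactly this trap: after the pointwise bound $\widetilde N F+\widetilde N(\delta\abs{\grad F})\ls\epsilon_0\, M_{\omega_0}(A_{\tilde\alpha}u_1)$ (Lemma \ref{L:2.9}) and the weight step (legitimate because $k_0\in RH_{p'}(d\sigma)$ comes from the assumed $\mathcal D_p$ for $\mathcal L_0$), it never bounds $A(u_1)$ or $S(u_1)$ by $N(u_1)$; instead it writes $S_\beta(u_1)\le S_\beta(u_0)+S_\beta(F)$, controls $S_\beta(u_0)$ by the data through the $\mathcal L_0$-theory, and controls $S_\beta(F)$ by the good-lambda inequality of Lemma \ref{L:good-lambda} (Corollary \ref{c1}) in terms of $\widetilde N(F)$, $\widetilde N(\delta\abs{\grad F})$ and $S_\beta(u_0)$, so the absorption happens on the $F$-quantities, which carry the factor $\epsilon_0$. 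Your sketch contains no substitute for this good-lambda step, and without it the absorption into $\norm{Nu_1}_{L^p}$ cannot be closed.

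Two further concrete problems. First, the kernel comparison you quote, $\mathcal G_0(X,A_r(Q))\eqs r^{2-n}\omega_0^X(\Delta_r(Q))$, is the divergence-form statement; for non-divergence operators the correct comparison (Lemma \ref{L:Greens_est}, due to Escauriaza--Kenig) carries the normalized adjoint solution: $\frac{G_0(x,y)}{\delta(y)^2}\,\frac{\wp(B(y))}{\wp(y)}\eqs\omega_0^x(\Delta_r(Q))$. Consequently the whole argument must be run with the $\wp$-weighted objects $S_\alpha$, $A_\alpha$, $\widetilde N_\alpha$ of Rios, not with plain $L^2$ Whitney averages and the standard $N$; your Whitney-sum bound for $\abs{w(X)}$ is not available as stated. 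Second, the good-lambda machinery also requires control of $\widetilde N(\delta\abs{\grad F})$, obtained in Lemma \ref{L:2.9} by a double-averaged integration by parts of $\mathcal L_0(F^2)$ against $G_0$; this gradient term is not optional (it appears in the boundary terms of that integration by parts and in the set defining the good-lambda estimate), and it is absent from your outline. Your treatment of the qualitative issues ($\mathcal{C}\mathcal{D}$ for $\mathcal L_1$ via small BMO, a priori finiteness via compactly supported perturbations) is sensible, but the main estimate needs the square-function/good-lambda detour rather than a direct $N$-to-$N$ bound.
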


\begin{remark} This theorem is a direct improvement of
Theorem 1.1 in \cite{R1} and Theorem 2.1 in \cite{R2} where a
statement of the type $\mathcal{D}_p$ for ${\mathcal L}_0$
$\Longrightarrow$ $\mathcal{D}_q$ for ${\mathcal L}_1$ was
established with $q>>p$.
\end{remark}

\begin{remark} It suffices to assume that the condition
$\epsilon_0<M$ in Theorem \ref{T:perturbation} only holds for all
Carleson regions $T_r$ such that $r\le r_0$ for some $r_0>0$.
This is due to the comparability of the elliptic measures of two operators
whose coefficients are the same near the boundary (see Lemma 2.15 of
\cite{R1}).
\end{remark}

\begin{remark} The theorem can be formulated on more
general domains. In fact, we never explicitly use that the
boundary of $D$ has a graph-like structure. The minimal geometric
structure needed is that $D$ be a chord-arc domain and
non-tangetially accessible.
\end{remark}

\begin{remark} The number $\rho_0$ is chosen such that
if $a^{ij}\in \BMO_{\rho}$ for any $\rho<\rho_0$, $\mathcal{C}\mathcal{D}$ holds for the operator
$\mathcal{L}=a^{ij}\partial_{ij}$ on $D$ and so that the value of $p_0(\rho_0)$ referenced in the discussion after Definition \ref{D:CD} satisfies $p_0(\rho_0) > 2$.
\end{remark}

\begin{theorem}\label{T:main} Let $1<p<\infty$, let $0<\lambda<\infty$ be a fixed ellipticity constant, and let $D$ be a
Lipschitz domain with Lipschitz constant $L$. Let ${\mathcal
L}=a^{ij}\partial_{ij}$ be an elliptic operator with ellipticity
constant $\lambda$.

If \[ \sup \left\{ \frac{\abs{a^{ij}(x) -
    \avg{a^{ij}(z)}}^2}{\delta(x)} : x \in
B_{\frac{\delta(z)}{2}}(z)\right\} \] is the density of a Carleson
measure in $D$ with Carleson constant $M$, then there is a
constant $C(p,\lambda)
> 0$ such that if $L < C(p,\lambda)$ and $M < C(p,\lambda)$,
the Dirichlet problem  $\mathcal{D}_p$ is solvable for the
operator $\mathcal{L}$. [Here $\avg{a^{ij}(z)}$ is the average of
the coefficient $a^{ij}$ over the ball $B_{\delta(z)/2}(x)$.]
\end{theorem}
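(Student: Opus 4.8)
\emph{Proof plan.} The plan is to deduce Theorem~\ref{T:main} from the perturbation result Theorem~\ref{T:perturbation} together with the solvability results of \cite{KP} and \cite{DPP} for operators satisfying the stronger \emph{gradient} Carleson condition. Fix $p\in(1,\infty)$. The first step is to replace $\mathcal{L}=a^{ij}\partial_{ij}$ by a regularized companion $\mathcal{L}_0=\bar a^{ij}\partial_{ij}$, where $\bar a^{ij}(x)=\int a^{ij}(y)\,\Theta\!\bigl(\tfrac{x-y}{\delta(x)}\bigr)\,\tfrac{dy}{\delta(x)^n}$ for a fixed smooth mollifier $\Theta$ supported in a small ball and of unit integral (one may additionally freeze $\bar a^{ij}$ to a constant matrix on $\{\delta(x)>r_0\}$, which affects neither the ellipticity constant, up to a harmless factor, nor --- by the remarks following Theorem~\ref{T:perturbation} --- the solvability of $\mathcal{D}_p$). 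Routine Whitney-type estimates then give that $\bar A=(\bar a^{ij})$ is elliptic with constant $\eqs\lambda$, that $\abs{\grad\bar A(x)}\ls\delta(x)^{-1}\,\mathrm{osc}_{B_{\delta(x)/2}(x)}A$, and that $\abs{A(x)-\bar A(x)}\ls\mathrm{osc}_{B_{\delta(x)/2}(x)}A$. Consequently the hypothesis of Theorem~\ref{T:main} with constant $M$ yields that $\delta\abs{\grad\bar A}^2\,dx$ is a Carleson measure with constant $\ls M$ and --- after the customary ``fattening'' of Whitney regions needed to absorb the extra supremum over $B_{\delta(x)/2}(x)$ in the definition of $\mathbf{a}$ --- that the density in \eqref{E:perturbation} for the pair $(\mathcal{L}_0,\mathcal{L})$ is a Carleson measure with constant $\ls M$ as well. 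Finally, the smallness of $M$ propagates to the statement $\bar a^{ij}\in\BMO_\rho$ with $\rho\to0$ as $M\to0$; in particular $\rho<\rho_0$ once $M$ is small, $\rho_0$ being the constant of Theorem~\ref{T:perturbation}.

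The second step is to solve $\mathcal{D}_p$ for $\mathcal{L}_0$. Since $\bar a^{ij}\partial_{ij}u=\partial_i(\bar a^{ij}\partial_j u)-(\partial_i\bar a^{ij})\partial_j u$, the $\mathcal{C}\mathcal{D}$ solutions of $\mathcal{L}_0 u=0$ coincide with the solutions of the divergence-form equation with drift $\mathrm{div}(\bar A\grad u)+b\cdot\grad u=0$, where $b=-\mathrm{div}\,\bar A$ satisfies $\delta\abs{b}^2\ls\delta\abs{\grad\bar A}^2$, again Carleson with constant $\ls M$. Thus $\bar A$ and $b$ meet the gradient-type Carleson hypotheses of \cite{KP} and \cite{DPP} with small constants, so those results apply and show that $\mathcal{D}_p$ holds for $\mathcal{L}_0$ provided $L$ and $M$ are below a threshold depending only on $p,\lambda,n$. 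What matters for us is that the corresponding solvability constant $C_p$ in \eqref{E:Dp} can be taken to depend on $\mathcal{L}_0$ only through $p,\lambda,n,L$ and $M$.

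The final step is to apply Theorem~\ref{T:perturbation} to $\mathcal{L}_0$ and $\mathcal{L}_1=\mathcal{L}$. By Step~1 the quantity $\epsilon_0$ in \eqref{E:perturbation} for this pair is $\ls M$ and $\bar a^{ij}\in\BMO_\rho$ with $\rho<\rho_0$; by Step~2, $\mathcal{D}_p$ is solvable for $\mathcal{L}_0$ with constant $C_p$ controlled by $p,\lambda,n,L,M$. Hence the threshold $M=M(p,D,\lambda,C_p,\rho)$ appearing in Theorem~\ref{T:perturbation} is bounded below purely in terms of $p,\lambda,n,L,M$, so choosing $L$ and $M$ smaller than a suitable constant $C(p,\lambda)$ (with $n$ fixed throughout) forces $\epsilon_0$ to lie below that threshold. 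Theorem~\ref{T:perturbation} then yields that $\mathcal{D}_p$ is solvable for $\mathcal{L}_1=\mathcal{L}$, which is the claim.

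\emph{Main obstacle.} The delicate point is the uniformity demanded by the last step: because the threshold in Theorem~\ref{T:perturbation} deteriorates as the solvability constant $C_p$ of the comparison operator grows, one must verify that the estimate for $\mathcal{L}_0$ supplied by \cite{DPP} is genuinely quantitative and uniform over the entire family of Whitney regularizations $\bar A$ arising from operators obeying the hypothesis of Theorem~\ref{T:main} with a fixed $M$ --- i.e.\ that $C_p$ depends on $\bar A$ only through $\lambda,n,L,M$. The remaining issues are technical bookkeeping about the Whitney average: establishing the pointwise oscillation bounds, checking that they survive the passage to $\mathbf{a}(x)=\sup_{z\in B_{\delta(x)/2}(x)}\abs{\epsilon(z)}$ (the fattened-Whitney-region argument), and confirming that a small oscillation Carleson constant forces $\bar a^{ij}\in\BMO_\rho$ for small $\rho$, so that the hypotheses of Theorem~\ref{T:perturbation} are in fact met.
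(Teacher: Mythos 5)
Your plan is correct and is essentially the paper's own argument: mollify the coefficients at the scale of the distance to the boundary (the paper does this in the flat case, after a Ne\v{c}as--Stein change of variables, via $\tilde a = a * \phi_t$), invoke \cite{KP}/\cite{DPP} for the regularized operator under the resulting small gradient Carleson condition $t\abs{\grad\tilde a}^2$, check that the original and mollified coefficients satisfy \eqref{E:perturbation} with constant $\ls M$, and conclude with Theorem \ref{T:perturbation}. The one point you defer as ``technical bookkeeping'' --- that the oscillation Carleson hypothesis with small constant $M$ forces the coefficients into $\BMO_{\rho}$ with $\rho\ls M^{1/2}<\rho_0$ --- is actually the bulk of the paper's written proof (a dyadic case analysis over balls that are small, large, or intermediate relative to $\delta$), so it deserves more than a passing mention, but your outline otherwise matches the paper's route, including the uniformity of the solvability constant $C_p$ from \cite{DPP} that you correctly flag as the quantitatively delicate ingredient.
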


This is a substantial improvement of Corollary 2.5 in
\cite{DPP}, in the spirit of Corollary 2.3 in the same paper for
divergence form operators. This also improves Theorem 2.4 in
\cite{R2}. Note that we do not consider drift terms as \cite{R2}
does.

\section{Notation and Technical Preliminaries}

To enhance the readability of this paper, we have kept our notation the
same as in \cite{R1}. We rely heavily on certain results from \cite{R1} in the
technical part of this paper.

Throughout the paper, we use $A \ls B$ to mean there is a constant
$C$, depending on, at most, $n, \lambda, \eta$ and $D$ such that
$A \le C B$; similarly for $A \gs B$. If $A \ls B$ and $A \gs B$,
then we say $A \eqs B$.

Let $\mathcal L$ be an elliptic operator for which $\mathcal{CD}$
 holds. By the maximum principle, the mapping $g
\mapsto u_g(x)$ is a positive linear functional on $C(\partial D)$
for each fixed $x \in D$. The Riesz representation theorem then
gives a unique regular positive Borel measure $\omega^x$ on
$\partial D$ such that
\[ u(x) = \int_{\partial D} g(Q) \, d\omega^x(Q). \]
This measure is called the harmonic measure for $\mathcal{L}$ on
$D$.

Given a non-decreasing function $\eta$, we denote by $O(\lambda,
\eta)$ the class of operators $\mathcal{L} = a^{ij}\partial_{ij}$
with symmetric coefficients satisfying \eqref{E:ellipticity} such
that $a^{ij} \in \BMO(\R^n)$ with BMO modulus of continuity $\eta$
in $D$. We use $O(\lambda)$ if there is no restriction on the
regularity of $\mathcal{L}$.

The theory of weights plays an important role in what follows. Given a $p$, $1
< p < \infty$, and two measures $\mu$ and $\nu$ on $\partial D$, if $\mu$ is
absolutely continuous with respect to $\nu$, let $k = \frac{\partial
\mu}{\partial \nu}$. Then, we say that $\mu \in A_p(d\nu)$ if there is a
constant $C < \infty$ such that for all boundary balls $\Delta$ (i.e., for some
$r> 0$, $Q \in \partial D$, $\Delta = \Delta_r(Q)$),
\begin{equation}\label{E:Ap_defn}
\left( \frac{1}{\nu(\Delta)} \int_{\Delta}k \,
d\nu\right)\left(\frac{1}{\nu(\Delta)} \int_{\Delta} k^{-\frac{1}{p-1}} \,
d\nu\right)^{\frac{1}{p-1}} \le C.
\end{equation}
We say that $\mu \in RH_p(d\nu)$, the reverse-H\"older class, if there is a
constant $C$ such that for all boundary balls $\Delta$,
\begin{equation}\label{E:RH_defn}
\left(\frac{1}{\nu(\Delta)} \int_{\Delta} k^p \, d\nu \right)^{\frac{1}{p}} \le
C \frac{1}{\nu(\Delta)} \int_{\Delta} k \, d\nu.
\end{equation}

Note that $\mu \in A_p(d\nu)$ if and only if $\nu \in RH_{p'}(d\mu)$, with $p' =
\frac{p}{p-1}$. The best constant $C$ in \eqref{E:Ap_defn} is called the $A_p$
``norm'' of $\mu$ and is denoted $A_p(\mu| d\nu)$. Recall that the
assumed measure on $\partial D$ is $\sigma$, standard surface measure, so by
$\mu \in A_p$, we mean $\mu \in A_p(d\sigma)$. Also, these classes of measures
(or weights) are related:
\[ \bigcup_{p'>1} RH_{p'}(d\nu) = \bigcup_{p > 1} A_p(d\nu) =:
A_{\infty}(d\nu).\]

A crucial ingredient in what follows is the fundamental theorem
relating weights to solutions of elliptic partial differential
equations (first proved by Dahlberg in \cite{D2}):
\begin{theorem}
Let $\omega$ be the harmonic measure with respect to $\mathcal{L}$ on $D$, and
let $\mu$ be a Borel measure on $\partial D$. Then the following are
equivalent:
\begin{enumerate}[(i)]
\item $\omega \in A_{\infty}(d\mu)$.
\item There is a $1 < p < \infty$ such that $\mathcal{D}_p$ holds, i.e,
\[\norm{Nu_g}_{L^p(\partial D, d\mu)} \le C_p \norm{g}_{L^p(\partial D,
d\mu)}.\]
\item $\omega$ is absolutely continuous with respect to $\mu$ and $\omega \in
RH_{p'}(d\mu)$ (where, again, $p' = \frac{p}{p-1}$).
\end{enumerate}
\end{theorem}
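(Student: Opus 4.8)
The plan is to prove the three-way equivalence by establishing the cycle $(ii)\Rightarrow(iii)\Rightarrow(i)\Rightarrow(ii)$, exploiting the duality $\omega\in A_\infty(d\mu)\Leftrightarrow\mu\in A_\infty(d\omega)$ and the self-improving nature of the reverse-Hölder and $A_p$ classes recorded just above. The analytic heart is the implication $(ii)\Rightarrow(iii)$, so I would begin there.

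For $(ii)\Rightarrow(iii)$: assume $\mathcal{D}_p$ holds, i.e. $\norm{Nu_g}_{L^p(\partial D,d\mu)}\le C_p\norm{g}_{L^p(\partial D,d\mu)}$ for all $g\in C(\partial D)$. First I would show $\omega\ll\mu$. Fix a boundary ball $\Delta=\Delta_r(Q)$ and a compact $F\subset\Delta$ with $\mu(F)$ small; choose $g\in C(\partial D)$ with $0\le g\le 1$, $g\equiv 1$ on $F$, supported in a slightly larger ball $2\Delta$. Then for the corkscrew point $A_r=A_r(Q)$ at height $\sim r$ over $Q$ one has $Nu_g(P)\ge u_g(A_r)\gtrsim\omega^{A_r}(F)$ for all $P$ in a boundary ball of radius $\sim r$ about $Q$ (interior Hölder/Harnack estimates for strong solutions, available since the coefficients lie in $\BMO_\rho$, give $u_g(A_r)\gtrsim\omega^{A_r}(F)$, and the nontangential cones over that boundary ball all contain $A_r$). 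Hence $\sigma(\Delta)^{1/p}\omega^{A_r}(F)\lesssim\norm{Nu_g}_{L^p(d\mu)}\lesssim C_p\mu(2\Delta)^{1/p}$, and by the doubling property of $\mu$ and of $\omega$ (both are doubling on $\partial D$ because $D$ is Lipschitz/NTA and the coefficients are $\BMO_\rho$), letting $\mu(F)\to 0$ forces $\omega(F)\to 0$; since $\omega$ is regular this yields $\omega\ll\mu$ on each $\Delta$, hence on $\partial D$. Writing $k=d\omega/d\mu$ and normalizing $\omega=\omega^{x_0}$ at a fixed interior point $x_0$ (all $\omega^x$ are mutually comparable by Harnack), the reverse-Hölder bound $RH_{p'}(d\mu)$ is equivalent, by the standard $\epsilon$-$\delta$/good-$\lambda$ argument of Dahlberg and of Fabes–Kenig–Pipher, to the weak-type estimate $\mu\{Q\in\Delta:k(Q)>\tau\,k_\Delta\}\le C\tau^{-p}\mu(\Delta)$ where $k_\Delta=\fint_\Delta k\,d\mu=\omega(\Delta)/\mu(\Delta)\sim\omega^{A_r}(\Delta)$. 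To get this, fix $\Delta$ and set $E_\tau=\{Q\in\Delta:k(Q)>\tau k_\Delta\}$; take $g=\chi_{\widetilde E}$ smoothed, where $\widetilde E$ is a compact nearly-full subset of $E_\tau$, so that $u_g(x)=\omega^x(\widetilde E)$. On a Whitney-type region $T_r(Q_0)$ with $Q_0$ a Lebesgue point of $k$ and $k(Q_0)>\tau k_\Delta$, the nonnegative solution $u_g$ satisfies $u_g(A_r(Q_0))\gtrsim\omega^{A_r(Q_0)}(\widetilde E\cap\Delta_{cr}(Q_0))$, and by the comparison of $\omega^{A_r(Q_0)}$ with $\omega/\omega(\Delta_r(Q_0))$ (a Carleson-estimate / change-of-pole statement for strong solutions, provable from the boundary Harnack principle on NTA domains together with the interior regularity from $\BMO_\rho$ coefficients) one gets $u_g(A_r(Q_0))\gtrsim k(Q_0)\mu(\Delta_r(Q_0))/\omega(\Delta_r(Q_0))\cdot(\text{local density of }\widetilde E)$, which near a Lebesgue point of $k$ is $\gtrsim k(Q_0)/k_{\Delta_r(Q_0)}$. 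A Vitali covering of $E_\tau$ by such Whitney balls, combined with $Nu_g(P)\ge u_g(A_r(Q_0))\gtrsim\tau$ on the corresponding boundary balls and the $L^p(d\mu)$ bound $\norm{Nu_g}_{L^p(d\mu)}^p\le C_p^p\mu(\widetilde E)\le C_p^p\mu(E_\tau)$, yields $\tau^p\mu(E_\tau)\lesssim C_p^p\mu(E_\tau)$ when $\tau$ is not too large and, crucially for large $\tau$, an iteration (self-improvement) of this across dyadic scales produces $\mu(E_\tau)\le C\tau^{-p}\mu(\Delta)$ with $C$ depending only on $C_p$, $p$, $n$, $\lambda$, $D$. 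This is precisely $RH_{p'}(d\mu)$ for $k$, i.e. $\omega\in RH_{p'}(d\mu)$, giving $(iii)$.

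For $(iii)\Rightarrow(i)$: this is the elementary direction. If $\omega\ll\mu$ and $\omega\in RH_{p'}(d\mu)$ then in particular $\omega$ satisfies a reverse-Hölder inequality with some exponent $>1$ with respect to the doubling measure $\mu$; by the self-improving property of reverse-Hölder weights (Gehring's lemma) and the identity $\bigcup_{p'>1}RH_{p'}(d\mu)=\bigcup_{p>1}A_p(d\mu)=A_\infty(d\mu)$ recorded above, $\omega\in A_\infty(d\mu)$, which is $(i)$.

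For $(i)\Rightarrow(ii)$: assume $\omega\in A_\infty(d\mu)$. Then $\omega\in A_q(d\mu)$ for some $q\in(1,\infty)$, equivalently (again by the duality stated above) $\mu\in RH_{q'}(d\omega)$, and also $\omega\in RH_{s}(d\mu)$ for some $s>1$; set $p=s'$. The estimate $\norm{Nu_g}_{L^p(d\mu)}\lesssim\norm{g}_{L^p(d\mu)}$ is obtained by the standard argument: first prove the basic pointwise-in-measure bound $\norm{Nu_g}_{L^1(d\omega)}\lesssim\norm{g}_{L^\infty}$ and the weak-(1,1) bound $\omega\{Nu_g>\tau\}\lesssim\tau^{-1}\int|g|\,d\omega$ — these follow from the Riesz representation $u_g(x)=\int g\,d\omega^x$, Harnack, and the maximal-function comparison $N u_g(Q)\lesssim M_\omega g(Q)$ for some Hardy–Littlewood-type maximal operator adapted to $\omega$ (valid because $\omega$ is doubling, as noted, via the boundary Harnack / Carleson estimate for strong solutions). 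Interpolating against the trivial $L^\infty$ bound gives $\norm{Nu_g}_{L^p(d\omega)}\lesssim\norm{g}_{L^p(d\omega)}$ for all $1<p<\infty$. Finally one transfers from $d\omega$ to $d\mu$: since $\mu\in A_\infty(d\omega)$, the $L^p(d\mu)$ boundedness of $N$ acting on $g$ follows by the usual $A_\infty$-weighted extrapolation/transference (writing $\int(Nu_g)^p\,d\mu$ and using that $\mu\in RH_{q'}(d\omega)$ with $g$ replaced by a smooth approximant, then an $A_p$-weighted inequality for the $\omega$-maximal operator dominating $N u_g$). Conclude $\mathcal{D}_p$ for the resulting $p$, closing the cycle.

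\textbf{Main obstacle.} The routine content is the $A_p$/reverse-Hölder juggling and the interpolation/transference in $(i)\Rightarrow(ii)$. The genuinely delicate step is $(ii)\Rightarrow(iii)$, specifically the \emph{change-of-pole} (Carleson) estimate $\omega^{A_r(Q_0)}(E)\eqs\omega^{x_0}(E\cap\Delta_r(Q_0))/\omega^{x_0}(\Delta_r(Q_0))$ for strong solutions and the boundary Harnack principle underlying it: these are standard for divergence-form operators with bounded measurable coefficients but here must be justified for non-divergence-form operators in the class $O(\lambda,\eta)$ with $\BMO_\rho$ coefficients, using the $W^{2,p}_{loc}$ theory of Chiarenza–Frasca–Longo together with NTA geometry. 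Once these geometric estimates and the doubling of $\omega$ are in hand, the good-$\lambda$ / Vitali argument producing the weak-$L^p$ bound on $k=d\omega/d\mu$, and hence $RH_{p'}$, proceeds exactly as in Fabes–Kenig–Pipher.
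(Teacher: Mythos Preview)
The paper does not supply a proof of this theorem. It is stated there as a classical background result, introduced with the phrase ``first proved by Dahlberg in \cite{D2}'', and is invoked only as a tool in the rest of the argument. There is therefore no ``paper's own proof'' to compare your proposal against.

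That said, your outline follows the standard route (Dahlberg; see also Kenig's CBMS notes): the pointwise domination $Nu_g\lesssim M_\omega g$ via the change-of-pole/Carleson estimate, weighted maximal-function bounds to pass between $d\omega$ and $d\mu$, and the Gehring/self-improvement machinery for $RH$ and $A_p$ classes. Your identification of the main obstacle is apt: the boundary Harnack principle, doubling of $\omega$, and the Carleson/change-of-pole estimate are indeed the non-trivial inputs in the non-divergence setting, and in this paper they are borrowed from Escauriaza--Kenig \cite{EK} and Rios \cite{R1} rather than re-proved. One small caveat: in your $(ii)\Rightarrow(iii)$ sketch, the line $\sigma(\Delta)^{1/p}\omega^{A_r}(F)\lesssim\norm{Nu_g}_{L^p(d\mu)}$ should read $\mu(\Delta)^{1/p}$ on the left (the $L^p$ norm is with respect to $\mu$), and the absolute-continuity argument as written does not quite force $\omega(F)\to 0$ from $\mu(F)\to 0$ without first localizing more carefully; the cleaner route is to derive $RH_{p'}$ directly (which already encodes $\omega\ll\mu$) via the good-$\lambda$/weak-type argument you describe afterward.
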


\begin{lemma}\label{L:adjoint}(Theorem 2.5 in \cite{R1})
If we let $p \in [n, \infty)$, $w \in A_p$, then there is a
constant $\rho_p$ such that if $\eta \in \Phi(\rho_p)$ and
$\mathcal{L} \in O(\lambda, \eta)$, for any $f \in L^p(D,w)$,
there exists a unique $u \in C(\overline{D}) \cap W^{2,p}(D,w)$
such that $\mathcal{L}u = f$ in $D$ and $u = 0$ on $\partial D$.
\end{lemma}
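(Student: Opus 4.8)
The plan is to obtain this as the weighted analogue of the classical theory of strong solutions for non-divergence form operators with small-$\BMO$ coefficients (\cite{CFL}): first an a priori weighted $W^{2,p}$ estimate carrying a lower-order term, then uniqueness for the homogeneous Dirichlet problem, then removal of the lower-order term, and finally existence by the method of continuity anchored at the Laplacian. The threshold $\rho_p$ will be chosen small depending on $n$, $p$, $\lambda$, $D$ and the $A_p$-constant of $w$.

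\emph{A priori estimate.} I would first show that, for $\rho_p$ small enough, every $\mathcal{L}\in O(\lambda,\eta)$ with $\eta\in\Phi(\rho_p)$ satisfies
\begin{equation*}
\norm{u}_{W^{2,p}(D,w)}\le C\bigl(\norm{\mathcal{L}u}_{L^p(D,w)}+\norm{u}_{L^p(D,w)}\bigr)
\end{equation*}
for all $u\in C(\overline D)\cap W^{2,p}(D,w)$ vanishing on $\partial D$, with $C=C(n,p,\lambda,D,[w]_{A_p})$. The argument is local: cover $\overline D$ by balls of small radius $r_0$; on a ball $B$ meeting $D$, freeze the coefficients at the average $a^{ij}_B$, so that $\mathcal{L}u=\widetilde{\mathcal{L}}_B u+(a^{ij}(\cdot)-a^{ij}_B)\partial_{ij}u$ with $\widetilde{\mathcal{L}}_B$ constant-coefficient. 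For $\widetilde{\mathcal{L}}_B$ one has the Calderón--Zygmund representation of $\partial_{ij}(\varphi u)$ as a singular integral $T(\widetilde{\mathcal{L}}_B(\varphi u))$ (plus a bounded multiple of $\widetilde{\mathcal{L}}_B(\varphi u)$), $\varphi$ a cutoff adapted to $B$; the frozen error then contributes a commutator $[a^{ij},T]$ applied to $\partial_{ij}(\varphi u)$. Now invoke the weighted bounds: $T$ is bounded on $L^p(D,w)$ for $w\in A_p$ (Coifman--Fefferman), and $\norm{[b,T]}_{L^p(D,w)\to L^p(D,w)}\ls\norm{b}_{\BMO}$ with the implicit constant depending only on $[w]_{A_p}$ (the weighted Coifman--Rochberg--Weiss theorem). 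Since the local $\BMO$ norm of $a^{ij}$ on $B$ is at most $\eta(r_0)\le\rho_p$, choosing $r_0$ and then $\rho_p$ small makes the commutator norm $\le 1/2$, so that term is absorbed on the left; the remaining first-order terms are absorbed by the weighted interpolation inequality. Patching the local estimates by a partition of unity, and handling the balls meeting $\partial D$ after flattening the Lipschitz graph (where the homogeneous boundary data kills the boundary contribution), yields the displayed inequality.

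\emph{Uniqueness and removal of the lower-order term.} Suppose $u\in C(\overline D)\cap W^{2,p}(D,w)$ solves $\mathcal{L}u=0$ with $u=0$ on $\partial D$. Since $w\in A_p$ we have $W^{2,p}(D,w)\hookrightarrow W^{2,1}_{\mathrm{loc}}(D)$, and $u$ is bounded; hence the \emph{unweighted} interior $W^{2,q}$ regularity for small-$\BMO$ coefficients --- valid for $q<p_0(\rho_p)$, with $p_0(\rho_p)>n$ once $\rho_p$ is small --- upgrades $u$ to $W^{2,n}_{\mathrm{loc}}(D)$, and then the Aleksandrov--Bakelman--Pucci maximum principle forces $u\equiv 0$. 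This is the only point where $p\ge n$ enters. Knowing the homogeneous problem has only the trivial solution, the term $\norm{u}_{L^p(D,w)}$ in the a priori estimate is dropped by the usual compactness contradiction, using the compact embedding $W^{2,p}(D,w)\hookrightarrow\hookrightarrow L^p(D,w)$ (a weighted Rellich theorem for $w\in A_p$ on a bounded Lipschitz domain).

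\emph{Existence.} For $t\in[0,1]$ set $\mathcal{L}_t=(1-t)\Delta+t\mathcal{L}$; its matrix $(1-t)I+tA(x)$ is symmetric, satisfies \eqref{E:ellipticity} with the same $\lambda$, and has $\BMO$ modulus at most $t\,\eta\le\eta$, so $\mathcal{L}_t\in O(\lambda,\eta)$ for all $t$, and the clean estimate $\norm{u}_{W^{2,p}(D,w)}\le C\norm{\mathcal{L}_t u}_{L^p(D,w)}$ holds uniformly in $t$ (uniformity by the standard perturbation in $t$). For $t=0$ the problem $\Delta u=f$, $u=0$ on $\partial D$ is solvable in $C(\overline D)\cap W^{2,p}(D,w)$ by weighted Newtonian potential theory together with the weighted Morrey-type embedding $W^{2,p}(D,w)\hookrightarrow C(\overline D)$ available for $p\ge n$; the method of continuity then propagates solvability to $t=1$, which is the claim. \emph{Main obstacle:} the heart of the matter is the a priori estimate, and specifically the order of quantifiers --- $\rho_p$ must be chosen \emph{after} fixing $[w]_{A_p}$, since the weighted commutator norm degrades with that constant --- and this interplay is the only genuinely delicate point, the rest being by now standard.
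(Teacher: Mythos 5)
This lemma is not proved in the paper at all: it is imported verbatim as Theorem 2.5 of \cite{R1} (whose proof in turn rests on \cite{CFL} and \cite{R3}), so there is no in-paper argument to compare with. Your plan --- freeze the coefficients, represent $\partial_{ij}u$ by a Calder\'on--Zygmund operator plus a commutator, use weighted $T$ and $[b,T]$ bounds to absorb the small-$\BMO$ error, then uniqueness via ABP and existence via the method of continuity --- is indeed the standard route for the interior/ball version of such results, and your remark about choosing $\rho_p$ after fixing $[w]_{A_p}$ is consistent with the order of quantifiers in the statement.

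The genuine gap is the boundary step, which is precisely where the content of the cited theorem lies. You propose to ``flatten the Lipschitz graph'' and run a boundary CZ estimate, but a Lipschitz flattening is only bi-Lipschitz: the transformed coefficients pick up the oscillation of $\grad\phi$, which is of size the Lipschitz constant $L$, and the lemma assumes no smallness of $L$, so the small-$\BMO$ structure your absorption argument needs is destroyed. Worse, if $W^{2,p}(D,w)$ is read as the usual global weighted Sobolev space, the target estimate is simply unattainable this way: already for $\mathcal{L}=\Delta$, $w\equiv 1\in A_p$ and $p=n$, global $W^{2,p}$ regularity up to the boundary fails on a Lipschitz domain with a reentrant corner, so the boundary behaviour must be handled through the precise formulation and machinery of \cite{R1}/\cite{R3}, not by a face-value boundary Calder\'on--Zygmund argument. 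Two further steps are asserted but false or unproved as stated: the embedding $W^{2,p}(D,w)\hookrightarrow C(\overline D)$ fails for general $A_p$ weights (take $w=\abs{x-x_0}^{\alpha}$ with $\alpha$ near $n(p-1)$, $n\ge 3$), so continuity of $u$ must come from barriers and the maximum principle rather than a weighted Morrey embedding; and in the uniqueness step, $w\in A_p$ only yields $u\in W^{2,1+\epsilon}_{loc}$, and the jump to $W^{2,n}_{loc}$ ``by unweighted interior regularity'' does not follow from a priori estimates, which never raise integrability by themselves --- low-integrability uniqueness is exactly the delicate point in the non-divergence theory (cf.\ \cite{Nad}, \cite{Sa}), so this upgrade needs an actual approximation or duality argument.
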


Then, with $f$ and $u$ as in Lemma \ref{L:adjoint}, for each $x \in D$, the
maximum principle implies that the positive linear functional $f \to -u(x)$ is
bounded on $L^p(D)$. The Riesz representation theorem then gives us the unique
non-negative function $G(x,\cdot) \in L^{p'}(D)$ ($p' = \frac{p}{p-1}$) such
that
\[u(x) = - \int_D G(x,y)f(y) \, dy.\]
This is the Green's function for $\mathcal{L}$ in $D$.

\begin{definition}\label{D:adjoint_solution}
Given $\mathcal{L} \in O(\lambda)$, $v \in L^1_{loc}(D)$ is an \emph{adjoint
solution} of $\mathcal{L}$ in $D$ if
\[\int_D v \mathcal{L}\phi \, dx = 0\] for all $\phi \in C^{\infty}_c(D)$. In this case, we write $\mathcal{L}^*v = 0$.
\end{definition}

Recall that we are assuming $D \subset B_1(0)$. Pick a point $\bar{x} \in \partial B_{9}(0)$, and let
\begin{equation}\label{E:nas}
\wp(y) = G_{\mathcal{L}, B_{10}(0)}(\bar{x},y) \text{ in $B_{10}(0)$,}
\end{equation}
where $G_{\mathcal{L}, B_{10}(0)}$ is the Green's function for
$\mathcal{L}$ in $B_{10}(0)$. The following technical estimate is
quite useful.
\begin{lemma}\label{L:Greens_est} (Lemma 2 in \cite{EK}). Let $G(x,y)$ be the Green's function in
$D$ for $\mathcal{L} \in O(\lambda)$. Then there is a constant $r_0$ depending
on the Lipschitz character of $D$, such that for all $Q \in \partial D$, $r \le
r_0$, $y \in \partial B_r(Q) \cap \Gamma_1(Q)$ and $x \notin T_{4r}(Q)$, the
following holds:
\[\frac{G(x,y)}{\delta(y)^2} \frac{\wp(B(y))}{\wp(y)} \eqs
\omega^x(\Delta_r(Q)),\] with $\wp$ as defined in \eqref{E:nas},
$B(y)=B_{\delta(y)/2}(y)$ and $\wp(B(y))=\int_{B(y)}\wp(y)dy$.
\end{lemma}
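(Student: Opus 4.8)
\textbf{Proof proposal for Lemma \ref{L:Greens_est}.}

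The plan is to deduce the estimate from the standard comparison principle for adjoint solutions together with the fact that both $G(x,\cdot)$ (for fixed $x \notin T_{4r}(Q)$) and $\wp$ are non-negative adjoint solutions of $\mathcal L$ near the boundary piece $\Delta_r(Q)$. First I would fix $Q \in \partial D$ and $r \le r_0$, and observe that on the Carleson region $T_{2r}(Q)$ the function $y \mapsto G(x,y)$ satisfies $\mathcal L^* G(x,\cdot) = 0$ in the distributional sense of Definition \ref{D:adjoint_solution} (since $x$ lies outside $\overline{T_{4r}(Q)}$, so no Dirac mass is present there), and likewise $\wp = G_{\mathcal L, B_{10}(0)}(\bar x, \cdot)$ is a non-negative adjoint solution of $\mathcal L$ throughout $B_{10}(0) \setminus \{\bar x\} \supset D$, vanishing on $\partial B_{10}(0)$. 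Both functions vanish continuously on $\Delta_{2r}(Q) \subset \partial D$. The heart of the matter is then the boundary comparison (boundary Harnack / doubling) principle for non-negative adjoint solutions vanishing on a boundary piece: for $y \in \partial B_r(Q) \cap \Gamma_1(Q)$, one has
\[
\frac{G(x,y)}{\wp(y)} \eqs \frac{G(x, A_r(Q))}{\wp(A_r(Q))},
\]
where $A_r(Q)$ is a corkscrew (non-tangential) point at height $\eqs r$ above $Q$; the point $y$ is interior-nontangential at the right scale, so interior Harnack lets one replace $y$ by $A_r(Q)$ in both numerator and denominator.

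Next I would relate the right-hand side to $\omega^x(\Delta_r(Q))$. This is where the normalization by $\delta(y)^2$ and by $\wp(B(y))$ enters. The key local estimate, valid for any non-negative adjoint solution $v$ of $\mathcal L$ vanishing on $\Delta_{2r}(Q)$, is the ``mean-value'' comparison
\[
v(y) \eqs \frac{1}{\delta(y)^2}\, \frac{1}{|B(y)|}\int_{B(y)} v(z)\, dz \cdot \text{(scale factor)},
\]
but more precisely the relevant fact from the Escauriaza–Kenig–type theory (and from \cite{R1}) is that the elliptic measure of $\mathcal L$ and the adjoint solution $\wp$ are linked via
\[
\omega^x(\Delta_r(Q)) \eqs \frac{1}{\wp(B(A_r(Q)))}\int_{T_r(Q)} G(x,z)\, \mathcal L^* \text{-type kernel} \dots
\]
— concretely, I would invoke the representation formula expressing $\omega^x(\Delta_r(Q))$ in terms of $G(x,A_r(Q))$, $\delta(A_r(Q))^2 \eqs r^2$, and the adjoint solution $\wp$ normalized over the ball $B(A_r(Q))$. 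Combining the boundary comparison with this representation, and using interior Harnack for $\wp$ to pass between $\wp(B(y))/\wp(y)$ and $\wp(B(A_r(Q)))/\wp(A_r(Q))$ (both are $\eqs r^n$ after cancellation, up to the normalization encoded in $\wp$), yields
\[
\frac{G(x,y)}{\delta(y)^2}\,\frac{\wp(B(y))}{\wp(y)} \eqs \frac{G(x,A_r(Q))}{r^2}\,\frac{\wp(B(A_r(Q)))}{\wp(A_r(Q))} \eqs \omega^x(\Delta_r(Q)),
\]
which is the claim.

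The main obstacle, and the step that must be cited rather than reproven, is the boundary comparison principle for non-negative \emph{adjoint} solutions of non-divergence form operators vanishing on a boundary portion — this is genuinely harder than its divergence-form analogue (there is no energy/Caccioppoli inequality available), and it requires the VMO-type hypothesis on the coefficients built into $O(\lambda)$ via the $\BMO_\rho$ smallness; it is exactly the content of the Escauriaza–Kenig work and is packaged in \cite{EK} and used extensively in \cite{R1}. A secondary technical point is bookkeeping the scale factors: one must check that the powers of $r$ coming from $\delta(y)^2 \eqs r^2$, from $|B(y)| \eqs r^n$, and from the doubling of $\wp$ all cancel so that the displayed quantity is genuinely scale-invariant and comparable to the (scale-invariant, by doubling) elliptic measure $\omega^x(\Delta_r(Q))$. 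Since Lemma \ref{L:Greens_est} is quoted verbatim as Lemma 2 of \cite{EK}, in the paper itself this would simply be stated with the reference; the proof sketch above records how one would reconstruct it.
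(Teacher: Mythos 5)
The paper does not prove Lemma \ref{L:Greens_est} at all: it is imported verbatim as Lemma 2 of \cite{EK} (and reappears, in the equivalent form $G_0(0,y)/\omega_0(\Delta_r(Q))\simeq \delta^2(y)\wp(y)/\wp(B(y))$, as the quoted Lemma 2.14 of \cite{R1} in Section 7). So your closing remark is right that a citation is all the paper supplies, and judged as a reconstruction sketch your outline is broadly in the spirit of the Escauriaza--Kenig argument. But it has one concrete error and one essentially circular step. The error: you assert that $\wp$ vanishes continuously on $\Delta_{2r}(Q)\subset\partial D$. It does not. By \eqref{E:nas}, $\wp=G_{\mathcal{L},B_{10}(0)}(\bar{x},\cdot)$ with $\bar{x}\in\partial B_9(0)$ and $D\subset B_1(0)$, so $\wp$ vanishes only on $\partial B_{10}(0)$ and is strictly positive on all of $\overline{D}$. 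That is the whole point of the normalization: $\wp$ is the fixed reference adjoint solution (\`a la Bauman, Fabes--Stroock) with respect to which one forms normalized adjoint solutions $v/\wp$, and it is deliberately chosen not to degenerate at $\partial D$. Consequently your ``boundary Harnack for two non-negative adjoint solutions both vanishing on the boundary portion'' framing does not apply to the pair $(G(x,\cdot),\wp)$; moreover, since $y\in\partial B_r(Q)\cap\Gamma_1(Q)$ forces $\delta(y)\eqs r$, the step replacing $y$ by a corkscrew point $A_r(Q)$ needs only the interior Harnack inequality and doubling for normalized adjoint solutions, not a boundary comparison principle at all.

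The second issue is that the genuinely hard content --- relating $G(x,A_r(Q))\,\wp(B(A_r(Q)))/\bigl(r^2\wp(A_r(Q))\bigr)$ to $\omega^x(\Delta_r(Q))$ --- is exactly the lemma evaluated at a nontangential point, and in your sketch it is invoked as an unspecified ``representation formula'' (the display is literally left incomplete). So the argument reduces the statement to itself at a corkscrew point plus interior Harnack; the substance in \cite{EK} is precisely the proof of that corkscrew-point comparison, via the Fabes--Stroock/Bauman estimates for nondivergence Green's functions and harmonic measure (critical density, doubling of $\wp(B(y))$, and the maximum principle), not via a boundary Harnack principle for the ratio $G(x,\cdot)/\wp$. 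If you intend this only as a pointer to the literature, say so and cite \cite{EK} (and Lemma 2.14 of \cite{R1}); as written, the sketch both misstates a hypothesis about $\wp$ and assumes its own conclusion at the key step.
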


Following Rios, \cite{R1}, we define a modified area function and
non-tangential maximal function which are adapted for the
non-divergence form situation.

\begin{definition}\label{D:area_function}[Area functions]
For a function $u$ defined on $D$, the area function of aperture $\alpha$,
$S_{\alpha} u$, and the second area function of aperture $\alpha$, $A_{\alpha}
u$, are defined as
\begin{align*}
S_{\alpha}u(Q)^2 &= \int_{\Gamma_{\alpha}(Q)} \frac{\delta^2(x)}{\wp(B(x))}
\abs{\grad u(x)}^2 \wp(x) \, dx, \text{ and} \\
A_{\alpha}u(Q)^2 &= \int_{\Gamma_{\alpha}(Q)} \frac{\delta^4(x)}{\wp(B(x))}
\abs{\grad^2 u(x)}^2 \wp(x) \, dx,
\end{align*}
with $\wp(x)$ as in \eqref{E:nas}, $B(x) = B_{\delta(x)/2}(x)$, and $Q \in
\partial D$.
\end{definition}
We also recall Rios' modified non-tangential maximal function,
\begin{equation}\label{E:ntmax}
\left(\widetilde{N}_{\alpha}(v)\right)^2(Q) := \sup_{x \in
\Gamma_{\alpha}(Q)} \int_{B_0(x)} v(y)^2 \frac{\wp(y)}{\wp(B(y))}
\, dy.
\end{equation}
Here, $B_0(x)$ denotes $B(x, \frac{\delta(x)}{6})$.

\section{Proof of the perturbation theorem \ref{T:perturbation}}

The structure of our proof owes much to the proof of Theorem 2.18
in \cite{FKP}. We use $S_\alpha$,
$A_\alpha$ and $\widetilde{N}_\alpha$ as defined in the section
above.

Let ${\mathcal L}_0$ and ${\mathcal L}_1$ be two operators as in
Theorem \ref{T:perturbation}, and consider any continuous boundary
data $g$. We first establish that $\mathcal{CD}$ holds
for ${\mathcal L}_1$. We observe that
\mbox{$\norm{A_0-A_1}^2_{L^\infty(D)}\ls \epsilon_0$.} Since
$a^{ij}_0\in \BMO_{\rho}$ and \eqref{E:perturbation} holds with
$\epsilon_0 < M$, $a^{ij}_1\in \BMO_{\rho+\epsilon}$, where
$\epsilon$ can be arbitrary small (it depends on $M$). So if $M$
is made small enough, we can ensure that
$\rho+\epsilon<\rho_0$. Recall that $\rho_0$ is chosen to guarantee that
$\mathcal{C}\mathcal{D}$ holds for any operator
$\mathcal{L}_k=a^{ij}_k\partial_{ij}$, as long as $a^{ij}_k \in \BMO_{\rho}$
 and $\rho < \rho_0$.

Notice also that if $\lambda$ is the ellipticity constant of
${\mathcal L}_0$, one can guarantee that the ellipticity constant
of ${\mathcal L}_1$ stays bounded away from zero, say by
$\lambda/2$, by making $M$ smaller if necessary.

Hence we can talk about solutions $u_0$ and $u_1$ to the
corresponding Dirichlet problem with the same boundary data $g$
for $\mathcal{ L}_0$ and $\mathcal{ L}_1$, respectively. Let
\mbox{$F=u_0-u_1$.} If follows that $\mathcal{L}_0 F=-\mathcal{L}_0 u_1$,
 so
\[F(x)=\int_{D} G_0(x,y)\mathcal{ L}_0u_1\,dy.\]
Here $G_0$ is the Green's function of the operator $\mathcal{ L}_0$.

We will use the following two lemmas and defer their proofs until
later. The following is analogous to Lemma 2.9 in \cite{FKP}.
\begin{lemma}\label{L:2.9}
There exists a constant $C = C(\lambda, n)$ such that under the
hypotheses of Theorem \ref{T:perturbation},
\[ \widetilde{N}F(Q) + \widetilde{N}(\delta\abs{\grad F})(Q) \le C
\epsilon_0 M_{\omega_0}(A_{\tilde{\alpha}}u_1)(Q).\]
\end{lemma}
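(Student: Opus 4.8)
The plan is to estimate $\widetilde{N}F(Q)$ and $\widetilde{N}(\delta|\grad F|)(Q)$ pointwise by exploiting the representation $F(x)=\int_D G_0(x,y)\mathcal{L}_0 u_1(y)\,dy$ together with the fact that $\mathcal{L}_0 u_1 = (\mathcal{L}_0-\mathcal{L}_1)u_1 = \epsilon^{ij}(y)\partial_{ij}u_1(y)$, so that $|\mathcal{L}_0 u_1(y)|\le |\epsilon(y)|\,|\grad^2 u_1(y)|\le \mathbf{a}(y')|\grad^2 u_1(y)|$ on the relevant scale. First I would fix $Q\in\partial D$ and $x\in\Gamma_\alpha(Q)$, and split the integral defining $F(x)$ into the contribution from $y$ close to $x$ (say $y\in B_0(x)$ or a fixed dilate) and the contribution from $y$ far away. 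For the near part, one uses the pointwise size estimate for the Green's function together with interior $W^{2,p}$ estimates (available because $a^{ij}_0\in\BMO_\rho$ with $\rho<\rho_0$, which is exactly why $\rho_0$ is chosen as in the fourth remark and why $p_0(\rho_0)>2$); for the far part, one uses Lemma \ref{L:Greens_est} to replace $G_0(x,y)/\delta(y)^2$ by $\omega_0^x(\Delta_r(Q_y))\,\wp(y)/\wp(B(y))$ up to constants, where $Q_y$ is a boundary point near $y$ at scale $r\approx\delta(y)$.

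The key mechanism is then a Cauchy–Schwarz step in the far-away integral: writing the integrand as
\[
\frac{\mathbf{a}(y)\,\delta(y)^2|\grad^2 u_1(y)|}{\delta(y)^2}\cdot\frac{\wp(y)}{\wp(B(y))}\,\omega_0^x(\Delta(y))
= \Big(\frac{\mathbf{a}(y)}{\delta(y)^{1/2}}\Big)\cdot\Big(\delta(y)^{3/2}|\grad^2 u_1(y)|\Big)\cdot\frac{\wp(y)}{\wp(B(y))}\,\omega_0^x(\Delta(y)),
\]
and applying Cauchy–Schwarz in $d\omega_0^x$-weighted form, one factor produces $\int \mathbf{a}^2(y)\delta(y)^{-1}\,d(\text{something})$, which is controlled by $\epsilon_0$ via the Carleson condition \eqref{E:perturbation} together with the fact that $\omega_0^x$ is comparable to surface measure localized by the Green's function estimate; the other factor produces the second area function $A_{\tilde\alpha}u_1$ evaluated at boundary points, integrated against $\omega_0^x$, which is dominated by $M_{\omega_0}(A_{\tilde\alpha}u_1)(Q)$. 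One has to choose the enlarged aperture $\tilde\alpha$ so that the cones $\Gamma_{\tilde\alpha}$ over the relevant boundary balls contain all the points $y$ arising in the decomposition. The estimate for $\widetilde{N}(\delta|\grad F|)$ is entirely parallel, using instead the gradient bound $|\grad_x G_0(x,y)|\lesssim \delta(x)^{-1}G_0(\cdot,\cdot)$-type estimates (or differentiating the Green's function representation and using interior estimates), which accounts for the extra factor of $\delta$ on the left.

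The main obstacle I expect is making the Cauchy–Schwarz/Carleson argument rigorous with the $\wp$-weights and the correct scale-by-scale summation: one must pass from the continuous integral over the Carleson region to a dyadic decomposition in $\delta(y)$, apply Lemma \ref{L:Greens_est} on each dyadic shell (where it is valid, i.e.\ $r\le r_0$ and $x\notin T_{4r}$), and verify that the constants from the Green's function comparison, the weight $\wp(y)/\wp(B(y))$, and the localized comparability $\omega_0^x(\Delta_r(Q_y))\approx \sigma(\Delta_r(Q_y))\cdot(\text{solid average of }\omega_0^x)$ all combine to give a clean Carleson-measure bound producing $\epsilon_0$. The small-BMO hypothesis on $a^{ij}_0$ enters precisely here, both to guarantee the interior $W^{2,p}$ regularity of $u_1$ needed to make sense of $\grad^2 u_1$ and to ensure the Green's function estimates of Lemma \ref{L:Greens_est} apply with uniform constants; the near-diagonal term is then handled by a standard singular-integral-type bound absorbed into the same right-hand side.
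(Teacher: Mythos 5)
Your plan for the first half, $\widetilde{N}F \ls \epsilon_0 M_{\omega_0}(A_{\tilde\alpha}u_1)$, is essentially an attempt to reprove Rios's Lemma 3.2 of \cite{R1}, which the paper simply cites; that is legitimate in principle, but note that even there your sketch leans on ingredients that are not available in the non-divergence setting: pointwise bounds $G_0(x,y)\ls \abs{x-y}^{2-n}$ near the diagonal and pointwise comparability of $\wp$ on a ball are precisely what fail for adjoint solutions (they obey no pointwise Harnack inequality), which is the whole reason the normalized quantities $\wp(y)/\wp(B(y))$ appear in \eqref{E:ntmax} and in Lemma \ref{L:Greens_est}; moreover a crude Cauchy--Schwarz against the kernel $\abs{x-y}^{2-n}$ already diverges on the near part when $n\ge 4$. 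So "a standard singular-integral-type bound absorbed into the same right-hand side" does not exist off the shelf here.

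The genuine gap is in the second half. Your claim that the estimate for $\widetilde{N}(\delta\abs{\grad F})$ is "entirely parallel", obtained by differentiating the representation and using $\abs{\grad_x G_0(x,y)}\ls \delta(x)^{-1}G_0$-type bounds, does not work: on the near-diagonal part the kernel would be of size $\abs{x-y}^{1-n}$, which is not locally square integrable in any dimension $n\ge 2$, so the Cauchy--Schwarz step pairing it against the $L^2$-type quantity $A_{\tilde\alpha}u_1$ breaks down irreparably; and for the far part the pointwise gradient comparison with $G_0$ is again not justified for non-divergence operators with merely small-BMO coefficients. The paper's proof avoids differentiating the representation altogether: it converts $\delta^2\abs{\grad F}^2\wp/\wp(B)$ into $\abs{\grad F}^2 G_0/\omega_0(\Delta)$ via Lemma 2.14 of \cite{R1}, writes $A_0\grad F\cdot\grad F=\mathcal{L}_0(F^2)-2F\mathcal{L}_0F$, integrates by parts against $G_0(0,\cdot)$ using the adjoint equation $\mathcal{L}_0^*G_0=0$ (with the averaging-in-radius trick to make the boundary terms meaningful), and thereby proves the Caccioppoli-type inequality \eqref{E:delta-grad-F-est}, whose right-hand side contains $\widetilde{N}(F)\widetilde{N}(\delta\abs{\grad F})$, $\widetilde{N}(F)^2$ and $\epsilon_0\widetilde{N}(F)A_{\tilde\alpha}(u_1)$; only the term with $\mathcal{L}_0F=-\epsilon^{ij}\partial_{ij}u_1$ sees the Carleson data. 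The lemma then follows by absorbing $\widetilde{N}(\delta\abs{\grad F})$ and inserting the cited bound for $\widetilde{N}(F)$. This intermediate inequality and the absorption step are the essential content of the paper's argument, and your proposal contains no substitute for them.
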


The next lemma is analogous to Lemma 2.16 in \cite{FKP}.
\begin{lemma}\label{L:good-lambda} Let $\alpha>0$. Then there exists
$0<\beta<\alpha$ depending only on the dimension, the number $\alpha$
and the Lipschitz constant of the domain $D$ such that the
following holds:

Suppose that $S_\beta(F)(P)\le\lambda$ for some $P$ in a surface
ball $\Delta=\Delta(P_0,r)\subset\partial D$. Then there exists
$c>0$, $\delta>0$ depending only on the Lipschitz character of the
domain $D$ and the ellipticity constant of the operator
$\mathcal{L}_0$  such that for any $\gamma>0$
\begin{eqnarray}
\sigma\big(\{Q\in\Delta&;&S_\beta(F)>2\lambda,\,
\widetilde{N}_\alpha(F)\le\gamma\lambda,\widetilde{N}_\alpha(\delta\abs{\grad
F})\le\gamma\lambda,\nonumber\\&&
\widetilde{N}_\alpha(F)A_\alpha(u_1)\le
(\gamma\lambda)^2\}\big)\le
c\gamma^{\delta}\sigma(\Delta).\label{GL1}
\end{eqnarray}
\end{lemma}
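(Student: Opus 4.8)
The plan is to prove this "good-$\lambda$" inequality by the standard stopping-time / local comparison argument, adapted to the non-divergence setting as in \cite{FKP}. First I would reduce to the scale of $\Delta$: since $S_\beta(F)(P)\le\lambda$ for some $P\in\Delta$, a point $x\in\Gamma_\alpha(Q)$ for $Q\in\Delta$ that contributes to $S_\beta(F)>2\lambda$ must lie (for $\beta$ chosen small enough relative to $\alpha$, as the geometric lemma of \cite{R1} guarantees) in the Carleson region $T_{Cr}(P_0)$; so only the part of $F$ living in a fixed multiple of $T_r$ matters, and we may work there. Inside this region I would perform a Calder\'on--Zygmund stopping-time decomposition of $\Delta$ at the level $\gamma\lambda$ (really at a carefully chosen power/multiple of $\gamma\lambda$) against the maximal function $M_\sigma$ of $\widetilde N_\alpha(F)$, $\widetilde N_\alpha(\delta\abs{\grad F})$ and the product $\widetilde N_\alpha(F)A_\alpha(u_1)$, extracting a family of disjoint "bad" surface balls $\{\Delta_j\}$ with $\sum_j\sigma(\Delta_j)\le c\gamma^{\delta}\sigma(\Delta)$ on which one of these exceeds its threshold, while on the good set the localized versions of all three quantities are under control.

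Next I would estimate the truncated square function on the good set. The key identity is that on $\Gamma_\beta^{\mathrm{loc}}(Q)$ one can write, via integration by parts against the Green's function / adjoint-solution weight $\wp$ (using that $F$ solves $\mathcal{L}_0F=-\mathcal{L}_0u_1=-\epsilon^{ij}\partial_{ij}u_1$), a bound of the shape
\begin{equation*}
S_\beta^{\mathrm{loc}}(F)(Q)^2 \ls \widetilde N_\alpha(F)(Q)\,\widetilde N_\alpha(\delta\abs{\grad F})(Q) + \epsilon_0\,\widetilde N_\alpha(F)(Q)\,A_\alpha(u_1)(Q) + (\text{boundary terms from }\Delta_j).
\end{equation*}
The first term is the classical $S\le \sqrt{N\cdot N'}$ Cauchy--Schwarz bound; the second comes from inserting the equation for $F$ and using the Carleson condition \eqref{E:perturbation} with constant $\epsilon_0$, together with Lemma \ref{L:2.9}; the last collects the flux through the stopping boundary. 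On the good set each of the three listed quantities is $\le C(\gamma\lambda)^2$ (directly, or after using their maximal functions are bounded there), so $S_\beta^{\mathrm{loc}}(F)(Q)\le C\gamma\lambda\cdot\text{(something)}$, which is $\le \lambda$ once $\gamma$ is small — forcing $Q$ into one of the bad balls $\Delta_j$. Summing $\sigma(\Delta_j)$ gives the $c\gamma^{\delta}\sigma(\Delta)$ bound, with $\delta$ the exponent produced by the weak-type $(1,1)$ / $L^2$ estimates feeding the stopping-time sum.

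I would organize the proof so that the passage from global $S_\beta$ to the localized square function over a sawtooth region is handled first (this is where the choice of $\beta<\alpha$ enters, via the cone-containment lemma from \cite{R1}), then the integration-by-parts estimate, then the stopping time. The main obstacle I anticipate is the integration-by-parts step in the non-divergence setting: unlike the divergence-form case, one cannot integrate $\mathcal{L}_0$ directly against test functions, so one must replace $\wp(x)\,dx$ by the adjoint-solution measure and use that $\mathcal{L}_0^*\wp=0$ (Definition \ref{D:adjoint_solution}) together with the Green's-function estimate of Lemma \ref{L:Greens_est} to convert solid integrals of $\delta^2\abs{\grad F}^2\wp/\wp(B)$ into boundary quantities controlled by $\widetilde N$; keeping track of the weight $\wp(B(x))$ and the error terms generated by the cut-offs near the sawtooth boundary, and ensuring the resulting boundary terms are genuinely supported on $\bigcup_j\Delta_j$, is the delicate bookkeeping. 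A secondary technical point is that the three listed hypotheses in \eqref{GL1} are exactly what is needed to absorb, respectively, the two explicit error terms and the product term coming from the equation $\mathcal{L}_0F=-\mathcal{L}_0u_1$, so the argument must be arranged so that each hypothesis is invoked precisely once and with the right power of $\gamma\lambda$.
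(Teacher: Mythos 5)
Your outline gets several ingredients right (truncation of the cones using $S_\beta(F)(P)\le\lambda$, the identity $\mathcal{L}_0F=-\epsilon^{ij}\partial_{ij}u_1$, the role of the adjoint-solution weight and the Green's function, and the fact that each of the three hypotheses is meant to absorb one error term), but the central mechanism you propose does not work. Your plan rests on two claims that are not justified. First, the stopping-time step: you extract balls $\Delta_j$ on which the maximal functions of $\widetilde N_\alpha(F)$, $\widetilde N_\alpha(\delta\abs{\grad F})$ and $\widetilde N_\alpha(F)A_\alpha(u_1)$ exceed the level $\gamma\lambda$ and assert $\sum_j\sigma(\Delta_j)\le c\gamma^{\delta}\sigma(\Delta)$. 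Nothing gives this: as $\gamma\to0$ those exceptional sets grow rather than shrink, and a weak-type bound at level $\gamma\lambda$ produces a factor $(\gamma\lambda)^{-1}$, i.e.\ the wrong direction. The smallness $\gamma^{\delta}$ is exactly what the lemma must produce, and in the paper it comes from an entirely different place: one proves $\omega_0(E)\le C\gamma^2\omega_0(\Delta)$ for the elliptic measure $\omega_0$ of $\mathcal{L}_0$ and then uses $\omega_0\in A_\infty(d\sigma)$ (available because $\mathcal{D}_p$ holds for $\mathcal{L}_0$) to convert this into $\sigma(E)\le c\gamma^{\delta}\sigma(\Delta)$; the exponent $\delta$ is the $A_\infty$ exponent, not one coming from a Calder\'on--Zygmund sum. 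Second, the pointwise bound $S^{\mathrm{loc}}_\beta(F)(Q)^2\ls \widetilde N(F)\widetilde N(\delta\abs{\grad F})+\epsilon_0\widetilde N(F)A(u_1)+\dots$ is not available for an individual cone: the lateral boundary terms of a single truncated cone cannot be controlled by nontangential maximal functions at the same point, which is precisely why no pointwise $S\ls N$ estimate exists.

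The paper instead argues in integrated form: after Chebyshev, $\omega_0(E)\le 4\lambda^{-2}\int_E S^2_{\beta,\tau r}(F)\,d\omega_0$; Fubini together with the comparison $G_0(0,y)/\omega_0(\Delta_r(Q))\eqs\delta^2(y)\wp(y)/\wp(B(y))$ converts this into a solid integral $\int_{\mathcal{D}_{\alpha'}}\abs{\grad F}^2G_0\,dx$ over a smoothed sawtooth region above $E$; then one writes $A_0\grad F\cdot\grad F=\mathcal{L}_0(F^2)-2F\mathcal{L}_0F$, integrates by parts using $\mathcal{L}_0^*G_0=0$ away from the pole, and removes the ill-defined boundary terms by averaging (``wiggling'') over a family of sawtooth domains $\mathcal{D}_{\alpha'}$, $\alpha'\in[\alpha',\alpha'']$, so that every surface integral becomes a solid integral over a thin shell. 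Only after returning to an integral over $Q\in2\Delta$ by Fubini does one invoke the three hypotheses, each once, to bound the resulting terms by $(\gamma\lambda)^2\omega_0(\Delta)$; the $I_2$ term additionally needs Cauchy--Schwarz in the cone and Rios's Carleson bound $\int_{2\Delta}E^2_{3\tau r}\,d\omega_0\le C\omega_0(2\Delta)$, which is the only place \eqref{E:perturbation} enters (and only its finiteness). Without this integrated sawtooth argument and the final $A_\infty$ step, your scheme has no source for the factor $\gamma^{\delta}$, so as written the proof does not go through.
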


Assuming Lemmas \ref{L:2.9} and \ref{L:good-lambda}, we have
\begin{align*}
\int_{\partial B_1}\widetilde{N}(F)^p \, d\sigma & \le \int_{\partial B_1}\left(\widetilde{N}(F)^p + \widetilde{N}(\delta \abs{\grad F})^p\right) \, d\sigma \\
& \le C \epsilon_0 \int_{\partial B_1} (M_{\omega_o}(A_{\tilde{\alpha}}u_1))^p \, d\sigma \\
& \le C \epsilon_0 \int_{\partial B_1} (M_{\omega_o}(A_{\tilde{\alpha}}u_1))^p k_0^{-1} \, d\omega_0\\
& \le C' \epsilon_0 \int_{\partial B_1} A_{\tilde{\alpha}}(u_1)^p
k_0^{-1} \, d\omega_0,
\end{align*}
where $k_0 = \frac{d\sigma}{d\omega_0}$. Recall that $k_0 \in
RH_{p'}(d\sigma)$ (since we are assuming ${\mathcal D}_p$
solvability for ${\mathcal L}_0$) is equivalent to $k_0^{-1} \in
A_p(d\omega_0)$.

We then have
\begin{eqnarray}
\int_{\partial B_1} \left(\widetilde{N}(F)^p + \widetilde{N}(\delta \abs{\grad F})^p\right) \, d\sigma &\le& C \epsilon_0 \int_{\partial B_1} A_{\tilde{\alpha}}(u_1)^p \, d\sigma \nonumber\\
& \le& C' \epsilon_0 \int_{\partial B_1} S_{c\widetilde{\alpha}}(u_1)^p \, d\sigma \nonumber\\
& \le& C'' \epsilon_0 \int_{\partial B_1} S_{{\beta}}(u_1)^p \, d\sigma \label{eq14}\\
& \le& C''' \epsilon_0 \int_{\partial B_1} (S_{{\beta}}(u_0)^p +
S_{{\beta}}(F)^p) \, d\sigma,\nonumber
\end{eqnarray}
where we used Theorem 2.19 from \cite{R1} and also Theorem 2.17 of
\cite{R1} for \\
\mbox{$\norm{S_{\tilde{c\alpha}}(u_1)}_{L^p}\eqs\norm{S_{{\beta}}(u_1)}_{L^p}$.}
It remains to deal with these terms.

First, we note  that $\int_{\partial B_1}
\left(S_{c\tilde{\alpha}}(u_0)\right)^p\, d\sigma \ls
        \int_{\partial B_1} f^p \, d\sigma$, using Theorem 2.17 from \cite{R1} and our assumption that the Dirichlet problem is solvable for $\mathcal{L}_0$.

For the second term, we will use the good-lambda inequality from
Lemma \ref{L:good-lambda}, more specifically its Corollary
\ref{c1}. According to it, we have an estimate
\begin{equation}
\int_{\partial B_1} S_{{\beta}}(F)^p \, d\sigma\le 2C
\int_{{\partial B_1}}
\left(\widetilde{N}(F)^p+\widetilde{N}(\delta \abs{\grad F})^p
\right)d\sigma + \int_{{\partial
B_1}}S_{\beta}(u_0)^p\,d\sigma.\label{eq15}
\end{equation}
The term $\int_{{\partial B_1}}S_{\beta}(u_0)^p\,d\sigma$ is harmless
and can be estimated by $\int_{\partial B_1} f^p \, d\sigma$, as
above. Now we put \eqref{eq14} and \eqref{eq15} together to obtain

$$\int_{\partial B_1} \left(\widetilde{N}(F)^p + \widetilde{N}(\delta
\abs{\grad F})^p\right) \, d\sigma \le C \epsilon_0
\int_{\partial B_1} \left(\widetilde{N}(F)^p +
\widetilde{N}(\delta \abs{\grad F})^p + f^p\right) \, d\sigma.$$

Hence for $\epsilon_0$ sufficiently small so that $C
\epsilon_0\le 1/2$, we have that
$$\int_{\partial B_1} \left(\widetilde{N}(F)^p + \widetilde{N}(\delta
\abs{\grad F})^p\right) \, d\sigma \le C \int_{\partial B_1} f^p
\, d\sigma.$$

This is the estimate required for the solvability of the Dirichlet
problem ${\mathcal D}_p$ for ${\mathcal L}_1$, as $u_1=u_0-F$, and
for $u_0$ we have the needed estimates due to the $L^p$
solvability for ${\mathcal L}_0$. Since for $\alpha'<\alpha$ we
have a pointwise estimate:
\[ N_{\alpha'} u_1(Q) \ls \widetilde{N}_\alpha u_1(Q) + \widetilde{N}_\alpha(\delta\abs{\grad u_1})(Q), \]
the theorem follows.

\section{Proof of Theorem \ref{T:main}}
Now that we have established the perturbation theorem, we can
easily dispense with the proof of Theorem \ref{T:main}. The first
part of the proof, dealing with the smooth perturbation of $A$, is
exactly the same as the smooth perturbation part of the proof of
Corollary 2.3 in \cite{DPP}. We repeat it here for convenience.

We prove this in the flat case; the general result will follow
from a change of variables by Ne\v{c}as and Stein (see, e.g.,
p.2 of \cite{DPP} for details). The notation $\avg{a}$ at a point
$(y,s)$ represents the average of $a$ over the ball
of radius $s/2$ centered at $(y,s)$ (denoted $B_{s/2}(y,s)$). Given a matrix coefficient
$a(x,t)$ in $\R^n_+$, set $\tilde{a}(x,t) = \int
a(u,s)\phi_t(x-u, s-t) \, ds \, du$, where $\phi$ is a smooth bump
function supported in the ball of radius $1/2$ and $\phi_t(y,s) =
t^{-n}\phi(y/t, s/t)$.

We are assuming that
\begin{equation}
\left(\sup\left\{\abs{a(y,s) - \avg{a(x,t)}}^2:(y,s) \in
B_{t/2}(x,t)\right\}\right) \frac{dx \, dt}{t}\label{E:carl22}
\end{equation}
is a Carleson measure with small norm.

We aim to establish three facts:
\begin{equation}\label{E:smoothCarleson}
t\abs{\grad\tilde{a}(x,t)}^2 \, dx\, dt
\end{equation} is a Carleson measure with small norm,
\begin{equation}\label{E:smoothperturbation}
\left(\sup\left\{\abs{a(y,s) - \tilde{a}(y,s)}^2:(y,s) \in
B_{t/2}(x,t)\right\}\right)\frac{dx\, dt}{t}
\end{equation}
satisfies the hypotheses of Theorem \ref{T:perturbation}, and
\begin{equation}\label{E:BMOrho}
a(x,t)\in\BMO_{\rho} \text{ for }
\rho=\rho(M), \text{ with } \rho\to 0 \text{ as } M\to 0.
\end{equation}

Given the results in \cite{DPP}, the condition
(\ref{E:smoothCarleson}) implies that $\mathcal{D}_p$ holds for
the operator with coefficients $\widetilde{A}$. Using \eqref{E:BMOrho}, if $M$ is chosen
sufficiently small we will have $\rho<\rho_0$. Combining this with \eqref{E:smoothperturbation}, as in the proof of Theorem \ref{T:perturbation} above, yields that $\widetilde{A}$ is in $\BMO_{\tilde{\rho}}$, for some $\tilde{\rho} < \rho_0$. Thus, the hypotheses for Theorem \ref{T:perturbation} are satisfied and, therefore, $\mathcal{D}_p$ holds for the operator with coefficients $A$.\vspace*{2mm}

That \eqref{E:smoothCarleson} follows from the hypotheses of Theorem \ref{T:main} is a straightforward calculation; apply the gradient to $\phi_t(y,s)$ and subtract a constant from the $a_{ij}$ inside the integrand to see that
\[ \abs{\grad\tilde{a}(x,t)} \le C t^{-1}\left(\sup\left\{\abs{a(y,s)- \avg{a(x,t)}}: (y,s) \in B_{t/2}(x,t)\right\}\right).\]

The proof of \eqref{E:smoothperturbation} is equally
straightforward; add and subtract the constant $\avg{a(x,t)}$
inside the difference. For precise details see \cite{DPP} and a
similar calculation in \cite{R1}.

It only remains to prove \eqref{E:BMOrho}. Choose an arbitrary point $(x,t)$ in our domain. We shall check
that the function $a$ is $\BMO$ near this point. Consider a ball
$B$ of radius $s>0$ centered at the point $(x,t)$. There are three
cases to consider:
\begin{enumerate}[(i)]
\item a small ball, with $s<t/2$,
\item a large ball, with $s\ge 2t$,
\item an intermediate ball, with  $t/2\le s<2t$.
\end{enumerate}

As we shall see, only the cases (i) and (ii) are fundamental. Case (iii)
is merely a combination of the approaches taken in (i) and (ii).

In case (i), \eqref{E:carl22} trivially gives that
\[
\sup_{(y,u)\in B_{t/8}(x,t)}\abs{a(y,u) - \avg{a(x,t)}}^2 \ls M,
\]
hence
\[ \text{osc}_{{B_{t/8}(x,t)}} a =\max_{ij}\abs{\sup_{(y,u)\in
B_{t/8}(x,t)}a^{ij}(y,u) - \inf_{(y,u)\in B_{t/8}(x,t)}
a^{ij}(y,u)} \ls M^{1/2}, \] From this
\begin{equation}
\text{osc}_{{B_{s}(x,t)}} a \le \text{osc}_{{B_{t/2}(x,t)}} a \ls
M^{1/2} \label{E:osc}
\end{equation} as the ball or radius $t/2$ can be
covered by a fixed number (depending only on dimension) of balls
of radius $t/8$. This immediately gives

\[\abs{B_s(x,t)}^{-1}\int_{B_s(x,t)}\abs{a^{ij}(y,u)-\avg[B_u]{a^{ij}}}\, dy\, du\ls
M^{1/2},\] hence $\rho\le CM^{1/2}$.

If (ii) holds then $B_s(x,t)$ intersects the boundary $\{t=0\}$ at
a large set of area of order $s^{n-1}$. One might think
of $D\cap B_s(x,t)$ as a subset of a larger Carleson box
$T(\Delta)$, where $\Delta$ is a surface ball on the boundary
$\{t=0\}$ of radius comparable to $s$ (a multiple of $s$ where the
constant depends on the dimension of our domain). Therefore, it will
suffice to prove that
\[\int_{T(\Delta)} \abs{a^{ij}(x,t)-\avg[T(\Delta)]{a^{ij}}}\, dx\, dt\ls M^{1/2}s^{n},\]
from which again $\rho\le CM^{1/2}$ on such balls.

In fact, the exact average that gets subtracted off in the $\BMO$
norm does not matter, so we might as well prove that
\[\int_{T(\Delta)} \abs{a^{ij}(x,t)-\avg[T_0(\Delta)]{a^{ij}}}\, dx \, dt\ls M^{1/2}s^{n}.\]
Here $T_k(\Delta)$, $k=0,1,2,\dots$ is defined diadically by
\[T_k(\Delta)=\{(x,t)\in T(\Delta);\, t\in (2^{-k-1}s,2^{-k}s]\}.\]
It follows that $T(\Delta)$ is a disjoint union of $T_k(\Delta)$,
$k\ge 0$.

By \eqref{E:osc} we immediately get that
\[\abs{a^{ij}(x,t)-\avg[T_0(\Delta)]{a^{ij}}}\ls M^{1/2},\qquad\text{for all }(x,t)\in T_0(\Delta).\]
Now consider $(x,t)\in T_1(\Delta)$. By using \eqref{E:osc} twice
we get that
\[\abs{a^{ij}(x,t)-\avg[T_0(\Delta)]{a^{ij}}}\ls 2M^{1/2},\qquad\text{for all }(x,t)\in T_1(\Delta),\]
and inductively
\[\abs{a^{ij}(x,t)-\avg[T_0(\Delta)]{a^{ij}}}\ls (k+1)M^{1/2},\qquad\text{for all }(x,t)\in T_k(\Delta).\]
Hence,
\begin{align*}
\int_{T(\Delta)}
&\abs{a^{ij}(x,t)-\avg[T_0(\Delta)]{a^{ij}}}\, dx\, dt = \\
&\qquad\sum_{k=0}^\infty
\int_{T_k(\Delta)}
\abs{a^{ij}(x,t)-\avg[T_0(\Delta)]{a^{ij}}}\, dx\, dt\\
&\qquad\ls \sum_{k=0}^\infty (k+1)M^{1/2}(2^{-k}s^n)\eqs M^{1/2}s^n,
\end{align*}
since $\abs{T_k(\Delta)}\eqs 2^{-k}s^n$.

Case (iii) is a combination of these two approaches where one
considers the integrals on pieces $B_{s}(x,t)\cap\{(y,u);\, u\in
(2^{-k-1}t,2^{-k}t]\}$. We leave the details for the reader.

By combining (i), (ii) and (iii) we see that
$a^{ij}\in\BMO_{\rho}$ for $\rho\le CM^{1/2}$, where $M$ is the bound on the
Carleson measure of \eqref{E:carl22} and $C>0$ is a constant that
depends on the dimension of our domain $D$.

\section{Proof of Lemma \ref{L:2.9}}

\begin{proof}[Proof of Lemma \ref{L:2.9}] From \cite{R1}, Lemma
3.2, we know that
\begin{equation}
\tilde{N}_{\alpha}F(Q) \ls \epsilon_0 M_{\omega_0}(A_{\tilde{\alpha}}u_1)(Q)
\end{equation}
for some $\tilde{\alpha}$ slightly larger than $\alpha$. We
will also show that
\begin{align}\label{E:delta-grad-F-est}
&\left(\widetilde{N}_{\alpha}(\delta \abs{\grad F})\right)^{2}(Q) \\
& \qquad \ls
\widetilde{N}_{\tilde{\alpha}}(F)(Q)\widetilde{N}_{\tilde{\alpha}}(\delta\abs{\grad
F})(Q) + \left(\widetilde{N}_{\tilde{\alpha}}(F)\right)^2(Q) + \epsilon_0
\widetilde{N}_{\tilde{\alpha}}(F)(Q)A_{\tilde{\alpha}}(u_1)(Q).\notag
\end{align}
Combining these two yields the lemma. Thus it remains to show
\eqref{E:delta-grad-F-est}.

To this end, we fix $Q \in \partial D$, $x \in
\Gamma_{\alpha}(Q)$. Also, find the required value for $r_0$ in
Lemma 2.14 of \cite{R1} (if necessary, making $r_0 <
\frac{1}{4}$), and then choose $r^* \le r_0/2$, where $r^*$ is the
truncation level of $\Gamma_{\alpha}(Q)$.

Under these assumptions, if we take $y\in B_0=B(x,
\frac{\delta(x)}{6})$ then $y\in \Gamma_{\tilde{\alpha}}(Q)$ for
a slightly larger cone and also $y \in
\partial B_r(Q)$ for $r \le r_0$. Hence, Lemma 2.14
of \cite{R1} can be applied to all of the points in our integral. Lemma
2.14 in \cite{R1} provides the estimate:
\[
\frac{G_0(0, y)}{\omega_0(\Delta_r(Q))} \simeq \frac{\delta^2(y)
\wp(y)}{\wp(B(y))},
\]
where $r = \abs{y - Q}$. For $y \in B_0$, we have that
$\frac{5}{6}\delta(x) \le r \le \frac{7}{6}\delta(x)$. We observe
that for $r$ in this range, all values of $\omega_0(\Delta_r(Q))$
are comparable to the value of $\omega_0(\Delta_{\delta(x)}(Q))$,
as the measure is doubling. Also, let $\delta := \delta(x)$.

Following \cite{FKP}, we start with
\[
\int_{B_0} \delta^2(y) \abs{\grad F(y)}^2 \frac{\wp(y)}{\wp(B(y))}
\, dy \ls \frac{1}{\delta} \int_{\delta/6}^{\delta/5}
\int_{B_r} \delta^2(y) \abs{\grad F(y)}^2 \frac{\wp(y)}{\wp(B(y))}
\, dy \, dr,
\]
where $B_r=B(x,r)$. However, we need to average twice. Thus, we
estimate
\begin{equation}\label{E:ntmax_dgradF_est}
\int_{B_0} \abs{\grad F(y)}^2 \frac{\delta^2(y)\wp(y)}{\wp(B(y))} \, dy
\ls \frac{1}{\delta^2} \int_0^{\delta} \int_{\rho_1(s)}^{\rho_2(s)}
\int_{B_r} \abs{\grad F(y)}^2 \frac{\delta^2(y)\wp(y)}{\wp(B(y))} \, dy \, dr
\, ds,
\end{equation}
with $\rho_1(s) = (\beta_1 - \frac{1}{6})s + \frac{\delta}{6}$,
$\rho_2(s) = (\beta_2 - \frac15)s + \frac{\delta}5$, with $\beta_1
< \frac{1}{6} < \frac15 < \beta_2$. The $\beta_i$'s are yet to be
determined.

Then,
\begin{align*}
\int_{B_r} \delta^2(y) \abs{\grad F(y)}^2 \frac{\wp(y)}{\wp(B(y))} \, dy &
\simeq
\int_{B_r} \abs{\grad F(y)}^2 \frac{G_0(0, y)}{\omega_0(\Delta_r(Q))} \, dy \\
& \ls \frac{1}{\omega_0(\Delta_{\delta}(Q))} \int_{B_r}
A_0\grad F \cdot \grad F \, G_0(0,
y) \, dy \\
& \simeq \frac{1}{\omega_0(\Delta_{\delta}(Q))} \int_{B_r}
(\mathcal{L}_0(F^2) - 2F\mathcal{L}_0 F) \, G_0(0,
y) \, dy \\
&:= I_1 + I_2.
\end{align*}

Here $A_0$ is the matrix of coefficients $(a^{ij}_0)$. We first
estimate the contribution to \eqref{E:ntmax_dgradF_est} by $I_1$.
Integration by parts twice yields:
\begin{align*}
&I_1 \omega_0(\Delta_{\delta}(Q)) = \int_{B_r} \mathcal{L}_0(F^2)
\, G_0(0,
y) \, dy \\
&\quad = \sum \int_{\partial B_r} G_0 a_0^{ij}\partial_j(F^2) \nu_i \, d\sigma - \sum \int_{B_r} \partial_i(G_0 a_0^{ij}) \partial_j(F^2) \, dy \\
&\quad = \sum \int_{\partial B_r} \left(G_0 a_0^{ij} \partial_j(F^2)
\nu_i - \partial_i(a_0^{ij}G_0)F^2 \nu_j\right) \, d\sigma +
\int_{B_r} \mathcal{L}_0^*(G_0) F^2 \, dy.
\end{align*}
However, $\mathcal{L}_0^*(G_0) = 0$, so we are only left with
the two boundary terms. Hence,
\begin{align*}
    &\frac{1}{\delta} \int_{\rho_1(s)}^{\rho_2(s)} I_1 \, dr \\
    & \quad = \frac{1}{\delta \omega_0(\Delta_{\delta}(Q))} \sum \left[ \int_{B_{\rho_2(s)}\backslash B_{\rho_1(s)}} \left(G_0 a_0^{ij} \partial_j(F^2) \nu_i - \partial_i(a_0^{ij}G_0)F^2 \nu_j\right) \, dy\right] \\
    & \quad = \frac{1}{\delta \omega_0(\Delta_{\delta}(Q))} \sum \Bigg[ \int_{B_{\rho_2(s)}\backslash B_{\rho_1(s)}} G_0 a_0^{ij} \left(\partial_j(F^2)\nu_i + \partial_i(F^2)\nu_j + F^2 \partial_i \nu_j\right) \, dy \\
    & \qquad \qquad \qquad - \int_{\partial(B_{\rho_2(s)}\backslash B_{\rho_1(s)})} G_0 a_0^{ij} F^2 \nu_i \nu_j \, d\sigma \Bigg]
\end{align*}

We estimate this term by term, starting with
\begin{align*}
& \frac{1}{\delta \omega_0(\Delta_{\delta}(Q))} \sum \int_{B_{\rho_2(s)}\backslash B_{\rho_1(s)}} \abs{G_0 a_0^{ij} \left(\partial_j(F^2)\nu_i + \partial_i(F^2)\nu_j\right)} \, dy \\
& \qquad \ls \frac{1}{\delta} \int_{B_{\rho_2(s)}\backslash B_{\rho_1(s)}} \abs{F}\abs{\grad F} \frac{\delta^2 \wp(y)}{\wp(B(y))} \, dy := I_1^a \\
\end{align*}
And
\begin{align}\label{E:I1a_est}
\frac{1}{\delta} \int_0^{\delta} I_1^a \, ds &\ls \int_{B_{\beta_2\delta}
\backslash B_{\beta_1\delta}} \abs{F}\abs{\grad F} \frac{\delta
\wp(y)}{\wp(B(y))}
\, dy \\
& \ls
\widetilde{N}_{\tilde{\alpha}}(F)\widetilde{N}_{\tilde{\alpha}}(\delta\abs{\grad F})
\notag.
\end{align}
Recall that $\tilde{\alpha}$ must be chosen a little larger
than $\alpha$. The parameters $\beta_i$ determine the size of
$\tilde{\alpha}$, as we want all points in
$B_{\beta_2\delta}\subset \Gamma_{\tilde\alpha}(Q)$. The this
choice is irrelevant as long as $\Gamma_{\tilde\alpha}(Q)$ is
still a nontagential cone.

Next, we look at $\nu$, the outward unit normal for $B_r$ arising
from our first integration by parts. We know $\nu_j =
\frac{x_j}{\abs{x}}$ when $\abs{x} = r$, and $\partial_i(\nu_j) =
\frac{x_i x_j}{\abs{x}^3}$. Thus, for $x \in
B_{\rho_2(s)}\backslash B_{\rho_1(s)}$, $\beta_1 \delta \le r=\abs{x}
\le \beta_2 \delta$
 whence $\abs{\partial_i \nu_j} \ls \frac{1}{\delta}$. This leads to
\begin{align*}
&\frac{1}{\delta \omega_0(\Delta_{\delta}(Q))} \sum
\int_{B_{\rho_2(s)}\backslash
B_{\rho_1(s)}} \abs{G_0 a_0^{ij} F^2 \partial_i(\nu_j)} \, dy \\
& \qquad \qquad \qquad \ls \frac{1}{\delta^2}
\int_{B_{\rho_2(s)}\backslash B_{\rho_1(s)}} F^2 \frac{\delta^2
\wp(y)}{\wp(B(y))} \, dy := I_1^b,
\end{align*}
and
\begin{equation}\label{E:I1b_est}
\frac{1}{\delta} \int_0^{\delta} I_1^b \ls \int_{B_{\beta_2 \delta}
\backslash B_{\beta_1 \delta}} F^2 \frac{\wp(y)}{\wp(B(y))} \, dy \ls
\left(\widetilde{N}_{\tilde{\alpha}}(F)\right)^2.
\end{equation}

For the last term, we see
\begin{align}\label{E:I1c_est}
&\frac{1}{\delta^2 \omega_0(\Delta_{\delta}(Q))} \sum
\int_0^{\delta} \int_{\partial(B_{\rho_2(s)}\backslash
B_{\rho_1(s)})} \abs{G_0 a_0^{ij} F^2
\nu_i \nu_j} \, d\sigma \, ds \\
& \qquad \qquad \ls
\frac{1}{\delta^2\omega_0(\Delta_{\delta}(Q))} \int_{(B_{\beta_2
\delta} \backslash B_{\delta/5})\cup (B_{
\delta/6} \backslash B_{\beta_1\delta})} G_0 F^2 \, dy \notag\\
& \qquad \qquad \ls \int_{B_{\beta_2 \delta}} F^2
\frac{\wp(y)}{\wp(B(y))} \, dy \ls
\left(\widetilde{N}_{\tilde{\alpha}}(F)\right)^2.\notag
\end{align}

We now turn to estimating $I_2$; by the fact that $\omega$ is
doubling,
\begin{equation}\label{E:eps}
\abs{I_2} \ls \int_{B_r} \abs{F}\abs{\mathcal{L}_0 F}
\frac{G_0(0,y)}{\omega_0(\Delta_\delta(Q))} \, dy  \ls
\epsilon_0 \int_{B_r} \abs{F}\abs{\grad^2 u_1}
\frac{\delta^2(y)\wp(y)}{\wp(B(y))} \, dy.
\end{equation}
Here we are using the fact that ${\mathcal L}_0F=-{\mathcal
L}_0u_1=-({\mathcal
L}_1+\epsilon^{ij}\partial_{ij})u_1=-\epsilon^{ij}\partial_{ij}
u_1$ and $\sup_{z\in B_r}\abs{\epsilon^{ij}(z)}\le \sup_{z\in
B_{(x,\delta/2)}}\abs{\epsilon(z)}=\mathbf{a}(x)$. Using the condition
from Theorem \ref{T:perturbation}, we get that $\mathbf{a}(x)\ls
\epsilon_0$.

Thus,
\begin{align}\label{E:I2_est}
\frac{1}{\delta^2} \int_0^{\delta} \int_{\rho_1(s)}^{\rho_2(s)} \abs{I_2}
\, dr \, ds &\ls \frac{1}{\delta} \int_{\beta_1 \delta}^{\beta_2 \delta} \abs{I_2} \, dr\\
& \ls \epsilon_0 \int_{B_{\beta_2 \delta}} \abs{F}\abs{\grad^2 u_1}
\frac{\delta^2(y)\wp(y)}{\wp(B(y))} \, dy \ls \epsilon_0
\widetilde{N}_{\tilde{\alpha}}(F)A_{\tilde{\alpha}}(u_1) \notag
\end{align}

By combining \eqref{E:ntmax_dgradF_est}, \eqref{E:I1a_est},
\eqref{E:I1b_est}, \eqref{E:I1c_est} and \eqref{E:I2_est}, the
lemma is proven.
\end{proof}

\section{Proof of Lemma \ref{L:good-lambda}}

We now prove Lemma \ref{L:good-lambda}, the good-lambda inequality which is crucial for
estimating $S(F)$.

\begin{proof}[Proof of Lemma \ref{L:good-lambda}] Let us call $E$ the set
\begin{align*}
E&=\left\{Q\in\Delta; \, S_\beta(F)>2\lambda,\,
\widetilde{N}_\alpha(F)\le\gamma\lambda, \right. \\
 & \qquad \qquad \quad \left. \widetilde{N}_\alpha(\delta\abs{\grad
F})\le\gamma\lambda, \widetilde{N}_\alpha(F)A_\alpha(u_1)\le
(\gamma\lambda)^2 \right\}.
\end{align*}
It is sufficient to prove that $\omega(E)\le
C\gamma^2\omega(\Delta)$, since we already know that $\omega\in
A_\infty(d\sigma)$. Standard arguments (see \cite{DJK} or
\cite{FKP}) show that since $S_\beta(F)>2\lambda$ on $E$, we
can choose $\gamma>0$ sufficiently small so that $S_{\beta,\tau
r}(F)>\lambda/2$, where each cone is truncated at height $\tau
r$ for some fixed $0<\tau<1$. It follows that
\begin{equation}
\omega(E)\le \frac4{\lambda^2}\int_E S^2_{\beta,\tau r}(F)(Q)
d\omega(Q).\label{eq1}
\end{equation}

We will introduce the following notation. Let
\begin{equation}
\mathcal{ D}:=\bigcup_{Q\in E}\Gamma_{\beta,\tau r}(Q).
\end{equation}
For $\alpha'\in (\beta,\alpha)$, let us consider a smoothed-out
version of the set $\bigcup_{Q\in E}\Gamma_{\alpha',\tau
r}(Q)$. We denote by $\mathcal{ D}_{\alpha'}$ the set with the
properties:
\begin{enumerate}[(i)]
\item $\mathcal{ D}\subset \bigcup_{Q\in E}\Gamma_{\alpha',\tau
r}(Q)\subset \mathcal{ D}_{\alpha'}\subset \bigcup_{Q\in
E}\Gamma_{\alpha,3\tau
r}(Q)$,
\item $\partial\mathcal{ D}_{\alpha'}$\mbox{ is smooth except at $E$ and
$\abs{\grad \nu(Q)}\le C/\delta_{D}(Q)$ for $Q\in \partial\mathcal{
D}_{\alpha'}$},
\item $\mathcal{D}_{\alpha'}\subset \mathcal{
D}_{\alpha''}\quad\mbox{if }\alpha'<\alpha''.$
\end{enumerate}
Here $\nu$ is the outer normal at the boundary and
$\delta=\delta_{D}$ denotes the distance to the boundary of the
original domain $D$. We now work with \eqref{eq1}.
\begin{eqnarray}
\omega(E)&\le&
\frac4{\lambda^2}\int_{E}\left(\int_{\Gamma_{\beta,\tau
r}(Q)}\abs{\grad F}^2\frac{{\delta^2\wp}(x)}{{\wp}(B(x))}dx
\right)\,d\omega(Q)\nonumber\\
&\le&\frac{C}{\lambda^2}\int_{E}\left(\int_{\Gamma_{\beta,\tau
r}(Q)}\abs{\grad F}^2G_0\frac{dx}{\omega(\Delta_x)}
\right)\,d\omega(Q)\label{eq2}\\
&\le&\frac{C}{\lambda^2}\int_\mathcal{ D_{\alpha'}}\abs{\grad
F}^2G_0\,dx\nonumber\\
&\le&\frac{C}{\lambda^2}\int_\mathcal{ D_{\alpha'}} (A_0\grad
F\cdot\grad F)G_0\,dx.\nonumber
\end{eqnarray}
Here $\Delta_x=\{Q\in \partial D; x\in\Gamma_\beta(Q)\}$ and
$\alpha'\in (\beta,\alpha)$. Now, \[A_0\grad F\cdot\grad
F=\mathcal{ L}_0(F^2)-2F\mathcal{ L}_0F,\] so there are two terms
to estimate
\begin{eqnarray}
&&\int_\mathcal{ D_{\alpha'}} (A_0\grad F\cdot\grad
F)G_0\,dx\nonumber\\&=&\int_\mathcal{ D_{\alpha'}}\mathcal{
L}_0(F^2)G_0\,dx-\int_\mathcal{ D_{\alpha'}}2F\mathcal{
L}_0F\,G_0\,dx.\label{eq3}
\end{eqnarray}
Let us denote these two terms by $I_1$ and $I_2$. We first deal
with $I_1$. Recall that $\mathcal{ L}_0G_0=-\delta(0)$, hence
integration by parts gives us only two boundary terms
\begin{eqnarray}
I_1&\le&\int_{\partial\mathcal{
D}_{\alpha'}}a_0^{ij}\partial_i(F^2)G_0\nu_j\,d\sigma-\int_{\partial\mathcal{
D}_{\alpha'}}
\partial_j(a_0^{ij}G_0)F^2\nu_i\,d\sigma.\label{eq4}
\end{eqnarray}
Note that, strictly speaking, these two boundary terms are not well-defined.
 To fix this, we again use the averaging technique
introduced before. We integrate over the interval
$[\alpha',\alpha'']\subset (\beta,\alpha)$ and get instead solid
integrals
\begin{eqnarray}
I_1&\le&c\abs{\int_{\mathcal{D}_{\alpha''} \setminus \mathcal{D}_{\alpha'}}a_0^{ij} \partial_i(F^2)G_0\delta^{-1}\nu_j\,dx} \nonumber \\
 & & \qquad + c\abs{\int_{\mathcal{D}_{\alpha''}\setminus\mathcal{D}_{\alpha'}}\partial_j(a_0^{ij}G_0)F^2\delta^{-1}\nu_i\,dx}\nonumber\\
&=&I_1^a+I_1^b.\label{eq5}
\end{eqnarray}
Now, for simplicity let
$\widetilde{\mathcal{D}}=\mathcal{D}_{\alpha''}\setminus\mathcal{D}_{\alpha'}$.
We see that

\begin{eqnarray}
I^a_1&\le&C\int_{\widetilde{\mathcal{D}}}\abs{F}\abs{\grad
F}G_0\delta^{-1}dx \nonumber\\
&\eqs& \int_{Q\in2\Delta}\left(\int_{\Gamma_{\beta}(Q)\cap
\widetilde{\mathcal{D}}}\abs{F}\abs{\grad
F}G_0\frac{dx}{\delta\omega(\Delta_x)}\right)d\omega(Q)\nonumber\\
&\eqs& \int_{Q\in2\Delta}\left(\int_{\Gamma_{\beta}(Q)\cap
\widetilde{\mathcal{D}}}\abs{F}\abs{\grad
F}\delta\frac{\wp(x)}{\wp(B(x))}dx\right)d\omega(Q).
\nonumber\\
&\le& \int_{Q\in2\Delta}\left(\int_{\Gamma_{\beta}(Q)\cap
\widetilde{\mathcal{D}}}\abs{F}\frac{\wp(x)}{\wp(B(x))}dx\right)^{1/2}\times\label{eq6}\\&&\qquad\left(\int_{\Gamma_{\beta}(Q)\cap
\widetilde{\mathcal{D}}}\abs{\grad
F}^2\delta^2\frac{\wp(x)}{\wp(B(x))}dx\right)^{1/2}d\omega(Q).\nonumber
\end{eqnarray}
The key is that if $x\in\Gamma_{\beta}(Q)\cap
\widetilde{\mathcal{D}}$ then $x\in\Gamma_{\alpha}(Q')$ for some
$Q'\in E$ and the set $\Gamma_{\beta}(Q)\cap
\widetilde{\mathcal{D}}$ is of diameter proportional to
$\delta(x)$ and its distance to $\partial D$ is also of
$\delta(x)$ size. This implies we can control the two solid
integrals the the last line by
$\widetilde{N}_\alpha(F)(Q')\widetilde{N}_\alpha(\delta\abs{\grad
F})(Q')$. This gives
\[I_1^a\le C\int_{Q\in
2\Delta}(\gamma\lambda)^2d\omega(Q)=C\gamma^2\lambda^2\omega(\Delta),\]
since the measure $\omega$ is doubling. To estimate $I_1^b$ we
integrate by parts one more time. We get
\begin{eqnarray}
I_1^b&\le&c\abs{\int_{\widetilde{\mathcal{D}}}
a_0^{ij}G_0\partial_j\left(\frac{F^2\nu_i}{\delta}\right)dx}+c\abs{\int_{\partial
\widetilde{\mathcal{D}}}
a_0^{ij}G_0{F^2\nu_i\nu_j}{\delta}^{-1}d\sigma}.\label{eq7}
\end{eqnarray}
The first term of \eqref{eq7} will give us two additional terms,
depending on where the derivative $\partial_j$ falls. By the chain
rule,
\[\abs{\partial_i\left(\frac{F^2\nu_i}{\delta}\right)}\le
\frac{C\abs{F}\abs{\grad F}}{\delta}+\frac{CF^2}{\delta^2}.\] Here we use
the fact that the real distance function $\delta$ can be replaced
by a smooth distance function so that $\abs{\grad \delta^{-1}}\eqs
\delta^{-2}$ and also $\abs{\grad \nu_i}\le C\delta^{-1}$. Hence, the
first term is of the same type as $I_1^a$, and the second one can
be bounded by
\begin{equation}
c\int_{\widetilde{\mathcal{D}}} F^2{G_0}{\delta^{-2}}\,dx\eqs
\int_{Q\in 2\Delta}\left(\iint_{\Gamma_{\beta}(Q)\cap
\widetilde{\mathcal{D}}}\abs{F}^2\delta\frac{\wp(x)}{\wp(B(x))}dx\right)d\omega(Q).\label{eq8}
\end{equation}
Thus, this term can be dominated by $C\int_{Q\in 2\Delta}
\left(\widetilde{N}_\alpha(F)(Q')\right)^2d\sigma\le
C\gamma^2\lambda^2\omega(\Delta).$ Finally, \eqref{eq7} has one
additional boundary term, which again has to be averaged out. So
we need to use the wiggling technique one more time. Without going
into too much detail, this will again turn the surface integral
into a solid integral over a set we call
$\widetilde{\mathcal{D'}}$ (essentially of the same type as
$\widetilde{\mathcal{D}}$):
\[\abs{\int_{\widetilde{\mathcal{D'}}}
a_0^{ij}G_0\frac{F^2\nu_i\nu_j}{\delta^2}dx}\ls
\int_{\widetilde{\mathcal{D'}}} F^2{G_0}{\delta^{-2}}\,dx.\]
Notice that this term is similar to \eqref{eq8}, so the same
estimates can be applied. This establishes
\[\abs{I_1^a}\le
C\gamma^2\lambda^2\omega(\Delta).\] Now we deal with $I_2$. As
before, we use $\mathcal{ L}_0 F=-\epsilon^{ij}\partial_{ij} u_1$,
where $\epsilon^{ij}=a_0^{ij}-a_1^{ij}$. This gives
\begin{eqnarray}
I_2&\le&C\int_\mathcal{ D_{\alpha'}}\epsilon(x)\abs{F}\abs{\grad^2
u_1}\,G_0\,dx,\label{eq9}
\end{eqnarray}
where $\epsilon(x)=\max \abs{\epsilon_{ij}(x)}$. We turn this
back (by Fubini) to into two integrals
\begin{eqnarray}
I_2&\ls&\int_{Q\in 2\Delta} \left(\int_{\Gamma_{\alpha'}(Q)\cap
\mathcal{ D}_{\alpha'}}\epsilon(x) \abs{F}\abs{\grad^2
u_1}\,\frac{G_0}{\omega(\Delta_x)}\,dx\right)d\sigma(Q)\label{eq10}\\
&\eqs&\int_{Q\in 2\Delta} \left(\int_{\Gamma_{\alpha'}(Q)\cap
\mathcal{ D}_{\alpha'}}\epsilon(x)\abs{F}\abs{\grad^2
u_1}\delta^2\frac{\wp(x)}{\wp(B(x))}\,dx\right)d\sigma(Q).\nonumber
\end{eqnarray}
By H\"older:
\begin{eqnarray}
I_2&\ls&\int_{Q\in 2\Delta} \left(\int_{\Gamma_{\alpha'}(Q)\cap
\mathcal{
D}_{\alpha'}}\epsilon(x)^2\abs{F}^2\frac{\wp(x)}{\wp(B(x))}\,dx\right)^{1/2}\times\label{eq11}\\
&&\quad\left(\int_{\Gamma_{\alpha'}(Q)\cap \mathcal{
D}_{\alpha'}}\abs{\grad^2
u_1}^2\delta^4\frac{\wp(x)}{\wp(B(x))}\,dx\right)^{1/2}d\sigma(Q).\nonumber
\end{eqnarray}
As $\alpha'<\alpha$, it can be arranged that either
$\Gamma_{\alpha'}(Q)\cap \mathcal{ D}_{\alpha'}\subset
\Gamma_{\alpha}(Q')$ for some $Q'\in E$ or
$\widetilde{N}_\alpha(F)(Q)A_\alpha(u_1)(Q)\le (\gamma\lambda)^2$.

Indeed, if $Q\in \overline{E}$, then the fact that
$\widetilde{N}_\alpha(F)(Q_n)A_\alpha(u_1)(Q_n) \le
(\gamma\lambda)^2$ for a sequence of $Q_n\in E$ converging to $Q$
implies the same for $Q$. In this case we just take $Q'=Q$.

Otherwise $d=\dist{Q,E}>0$, and this gives that
$\Gamma_{\alpha'}(Q)\cap \mathcal{ D}_{\alpha'}$ only contains
points of distance $\delta \gs d$. Hence by making $\alpha$
sufficiently large we will have $\Gamma_{\alpha'}(Q)\cap \mathcal{
D}_{\alpha'}\subset \Gamma_{\alpha}(Q')$ for all points $Q'\in E$
such that $\dist{Q,Q'}\eqs d$. If follows that

\[\left(\int_{\Gamma_{\alpha'}(Q)\cap \mathcal{
D}_{\alpha'}}\abs{\grad^2
u_1}^2\delta^4\frac{\wp(x)}{\wp(B(x))}\,dx\right)^{1/2}\le
A_{\alpha}(u_1)(Q').\] On the other hand,
\begin{equation}
\left(\iint_{\Gamma_{\alpha'}(Q)\cap \mathcal{
D}_{\alpha'}}\epsilon(x)^2\abs{F}^2\frac{\wp(x)}{\wp(B(x))}\,dx\right)^{1/2}\le
E_{3\tau r}(Q)\widetilde{N}_{\alpha}(F)(Q'),\label{eq12}
\end{equation}
where
\[E_{3\tau r}(Q)=\left(\int_{\Gamma_{\alpha,3\tau r}(Q)}
\frac{\left(\sup_{B(x,\,\delta(x)/6)}\epsilon(x)\right)^2}{\delta^n}\,dx\right)^{1/2}.\]
To see \eqref{eq12} we cover the set $\Gamma_{\alpha'}(Q)\cap
\mathcal{D}_{\alpha'}$ by a union of balls of diadic diameters
$2^k r$, $k\in \Z$, with each such ball of approximate distance $2^k r$
to the boundary such that each point $x\in \Gamma_{\alpha'}(Q)\cap
\mathcal{D}_{\alpha'}$ belongs to at most $K$ balls. (Simple
geometric considerations imply that $K$ will only depend on the
dimension, the number $\alpha'$ and the Lipschitz constant of $D$). On
each such ball, the square of the solid integral on the left-hand
side of \eqref{eq12} can be estimated by $C\left(\sup_{x\in
B^i}\epsilon(x)^2\right)\widetilde{N}_\alpha(F)^2(Q')$. After
we sum over all the balls we get the expression $CKE^2_{3\tau
r}(Q)\widetilde{N}_{\alpha}(F)^2(Q')$. It follows that
\begin{eqnarray}
I_2&\le&C\int_{Q\in
2\Delta}A_{\alpha}(u_1)(Q')\widetilde{N}_{\alpha}(F)(Q')E_{3\tau
r}(Q)\,d\omega(Q)\nonumber\\
&\le& C\gamma^2\lambda^2\int_{Q\in 2\Delta}E_{3\tau
r}(Q)\,d\omega(Q)\\
&\le& C\gamma^2\lambda^2\omega(2\Delta)^{1/2}\left(\int_{Q\in
2\Delta}E^2_{3\tau
r}(Q)\,d\omega(Q)\right)^{1/2}\nonumber\\
&\le& C\gamma^2\lambda^2\omega(2\Delta)\le
C'\gamma^2\lambda^2\omega(\Delta).\nonumber
\end{eqnarray}
since
\[\int_{Q\in 2\Delta}E^2_{3\tau r}(Q)\,d\omega(Q)\le
C\omega(2\Delta)\] by Rios's work (see p. 683 of \cite{R1}). Note
that this is the only place we are using \eqref{E:perturbation},
and we do not use the fact that it is small, only that it is finite. This establishes the
good-lambda lemma.\qed

\begin{corollary}\label{c1} Lemma \ref{L:good-lambda} implies that for any
$1<p<\infty$:
\begin{equation}
\int_{\partial D} S(F)^pd\,\sigma\le C(q) \int_{\partial D}
\left((\widetilde{N}(F)^p+\widetilde{N}(\delta \abs{\grad
F})^p\right) d\sigma + \int_{\partial D} S(u_0)^p\,d\sigma,
\label{eq13}
\end{equation}
where the square function $S$ is defined over cones of smaller
aperture than the modified nontangential maximal function
$\widetilde{N}$.
\end{corollary}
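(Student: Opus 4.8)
The plan is to deduce \eqref{eq13} from Lemma~\ref{L:good-lambda} by the classical passage from a good-$\lambda$ inequality to an $L^p$ bound, in the style of Dahlberg and of \cite{DJK}, \cite{FKP}, adapted to the functionals $S_\beta$, $\widetilde N_\alpha$, $A_\alpha$. Two preliminaries are in order. First, to secure the a priori finiteness needed for the absorption at the end, I would run the whole argument with $S_\beta(F)$, $S_\beta(u_0)$, $A_\alpha(u_1)$ and the $\widetilde N_\alpha$-functionals truncated at a fixed height $h>0$ above $\partial D$; over such a truncated cone $\abs{\grad F}$ is bounded by interior estimates, so every $L^p(\partial D)$ norm appearing below is finite, Lemma~\ref{L:good-lambda} remains valid for the truncated functionals (its proof uses only local integrations by parts and the further truncation at $\tau r$, both compatible with a height cutoff), and none of the constants produced depends on $h$, so one lets $h\to 0^+$ by monotone convergence at the very end. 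Second, all apertures are chosen consistently with Lemma~\ref{L:good-lambda}: $S$ over the cone of aperture $\beta$, and $\widetilde N$, $A$ over the wider aperture $\alpha>\beta$; changes of aperture of square functions below cost only fixed multiplicative constants, by Theorem~2.17 of \cite{R1}. Below I suppress $h$ from the notation.

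Fix $\lambda>0$ and set $O_\lambda=\{Q\in\partial D:S_\beta(F)(Q)>\lambda\}$, a relatively open subset of $\partial D$ by lower semicontinuity of the area function (cf.\ \cite{DJK}). If $O_\lambda\neq\partial D$, take a Whitney decomposition $O_\lambda=\bigcup_j\Delta_j$ into surface balls $\Delta_j$ having bounded overlap and satisfying $\dist{\Delta_j,\,\partial D\setminus O_\lambda}\eqs\mathrm{diam}\,\Delta_j$, so that a fixed dilate $C\Delta_j$ contains a point $P_j$ with $S_\beta(F)(P_j)\le\lambda$. Applying Lemma~\ref{L:good-lambda} on each $C\Delta_j$, summing over $j$, and using the bounded overlap together with the doubling of $\sigma$ on $\partial D$ yields
\[
\sigma\bigl(\{S_\beta(F)>2\lambda\}\cap G_\lambda\bigr)\le c\,\gamma^{\delta}\,\sigma(O_\lambda),
\]
where $G_\lambda$ denotes the intersection of $\{\widetilde N_\alpha(F)\le\gamma\lambda\}$, $\{\widetilde N_\alpha(\delta\abs{\grad F})\le\gamma\lambda\}$ and $\{\widetilde N_\alpha(F)A_\alpha(u_1)\le(\gamma\lambda)^2\}$. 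The range of small $\lambda$ for which $O_\lambda=\partial D$ is disposed of by the standard device for bounded domains (cf.\ \cite{D2}, \cite{DJK}).

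Now pass to complements: $\{S_\beta(F)>2\lambda\}$ is contained in $\bigl(\{S_\beta(F)>2\lambda\}\cap G_\lambda\bigr)$ together with $\{\widetilde N_\alpha(F)>\gamma\lambda\}$, $\{\widetilde N_\alpha(\delta\abs{\grad F})>\gamma\lambda\}$ and $\{\widetilde N_\alpha(F)A_\alpha(u_1)>(\gamma\lambda)^2\}$. Multiplying the corresponding measure inequality by $p\lambda^{p-1}$ and integrating over $\lambda\in(0,\infty)$ (the layer-cake formula), the main term contributes $c\,2^{p}\gamma^{\delta}\norm{S_\beta(F)}_{L^p}^p$, the two maximal-function sets contribute $C\gamma^{-p}\bigl(\norm{\widetilde N_\alpha(F)}_{L^p}^p+\norm{\widetilde N_\alpha(\delta\abs{\grad F})}_{L^p}^p\bigr)$, and---after the substitution $s=(\gamma\lambda)^2$---the product set contributes a constant times $\gamma^{-p}\int_{\partial D}\bigl(\widetilde N_\alpha(F)A_\alpha(u_1)\bigr)^{p/2}\,d\sigma$. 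By H\"older's inequality this last integral is at most $\norm{\widetilde N_\alpha(F)}_{L^p}^{p/2}\norm{A_\alpha(u_1)}_{L^p}^{p/2}$; using $u_1=u_0-F$ with Theorems~2.17 and 2.19 of \cite{R1} to obtain $\norm{A_\alpha(u_1)}_{L^p}\ls\norm{S_\beta(u_0)}_{L^p}+\norm{S_\beta(F)}_{L^p}$, and then Young's inequality with a parameter $t>0$, it follows that the product set contributes at most $C\gamma^{-p}\bigl(t^{-1}\norm{\widetilde N_\alpha(F)}_{L^p}^p+t\,\norm{S_\beta(u_0)}_{L^p}^p+t\,\norm{S_\beta(F)}_{L^p}^p\bigr)$. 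Collecting everything,
\begin{multline*}
\norm{S_\beta(F)}_{L^p}^p\le\bigl(C_1\gamma^{\delta}+C_2\gamma^{-p}t\bigr)\norm{S_\beta(F)}_{L^p}^p\\
+C_3\gamma^{-p}(1+t^{-1})\bigl(\norm{\widetilde N_\alpha(F)}_{L^p}^p+\norm{\widetilde N_\alpha(\delta\abs{\grad F})}_{L^p}^p\bigr)\\
+C_4\gamma^{-p}t\,\norm{S_\beta(u_0)}_{L^p}^p.
\end{multline*}
Choosing first $\gamma$ small so that $C_1\gamma^{\delta}\le\tfrac14$, and then $t$ small (depending on $\gamma$) so that $C_2\gamma^{-p}t\le\tfrac14$ and $C_4\gamma^{-p}t\le1$, one absorbs the two $\norm{S_\beta(F)}_{L^p}^p$ terms into the left-hand side and obtains \eqref{eq13}; finally $h\to 0^+$.

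The step I expect to be the main obstacle is the treatment of the product term $\widetilde N_\alpha(F)A_\alpha(u_1)$: it must be split so that the multiple of $\norm{S_\beta(F)}_{L^p}^p$ that it regenerates can be made as small as one wishes---which forces the order ``choose $\gamma$ first, then $t=t(\gamma)$'' in the absorption---and the replacement $A_\alpha(u_1)\ls S_\beta(u_0)+S_\beta(F)$ rests on Rios's area-function equivalences. Keeping the various apertures straight throughout, and disposing of the small-$\lambda$ regime on the bounded domain $D$ (where the a priori finiteness supplied by the $h$-truncation is precisely what legitimizes the absorption), constitute the remaining, routine, bookkeeping.
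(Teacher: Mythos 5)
Your proposal is correct and follows essentially the same route as the paper: the good-$\lambda$ inequality of Lemma \ref{L:good-lambda} is converted to an $L^p$ bound (the paper invokes the Whitney/layer-cake step without writing it out), the product term $\bigl(\widetilde{N}_\alpha(F)A_\alpha(u_1)\bigr)^{p/2}$ is split by H\"older/Young with a small parameter, $A_\alpha(u_1)$ is replaced by $S_\beta(u_1)\ls S_\beta(u_0)+S_\beta(F)$ via Theorems 2.19 and 2.17 of \cite{R1}, and the $S_\beta(F)$ term is absorbed. Your extra truncation at height $h$ to guarantee a priori finiteness before absorbing is a point the paper leaves implicit, but it does not change the argument.
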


\noindent {\it Proof.} Indeed, the Whitney decomposition and
Lemma \ref{L:good-lambda} gives us
\begin{align*}
\int_{\partial D} S_\beta(F)^pd\,\sigma &\le C\left[ \int_{\partial
D} \left(\widetilde{N}_\alpha(F)^p + \widetilde{N}_\alpha(\delta
\abs{\grad F})^p\right) d\sigma \right. \\
& \qquad \qquad + \left. \int_{\partial D}
(A_\alpha(u_1)\widetilde{N_\alpha}(F))^{p/2}\,d\sigma\right]
\end{align*}
for some $\beta<\alpha$. This implies that for any
$\epsilon>0$,
\begin{equation*}
\int_{\partial D} S_\beta(F)^pd\,\sigma\le C(\epsilon)
\int_{\partial
D}\left(\widetilde{N}_\alpha(F)^p+\widetilde{N}_\alpha(\delta
\abs{\grad F})^p\right) d\sigma + \epsilon\int_{\partial
D}A_\alpha(u_1)^p\,d\sigma.
\end{equation*}
By Theorem 2.19 of \cite{R1}, since $u_1$ is a solution to
$\mathcal{ L}_1u_1=0$ we have pointwise estimate $A_\alpha(u_1)\le
C S_{c\alpha}(u_1)$ for some $c>1$ depending only on the dimension
$n$. Also by \cite{EK} (see also Theorem 2.17 of \cite {R1}) for
solutions we have $\norm{S_{c\alpha}(u_1)}_{L^p}\le
C\norm{S_{\beta}(u_1)}_{L^p}$ with $C$ only depending on the
ellipticity constant, the numbers $c\alpha$ and $\beta$ and the
dimension.

This gives
\begin{equation*}
\int_{\partial D} S_\beta(F)^pd\,\sigma\le C(\epsilon)
\int_{\partial D}
\left(\widetilde{N}_\alpha(F)^p+\widetilde{N}_\alpha(\delta
\abs{\grad F})^p\right) d\sigma + C_1\epsilon \int_{\partial
D}S_{\beta}(u_1)^p\,d\sigma.
\end{equation*}

We can write $S_{\beta}(u_1)^p\le
C_2(S_{\beta}(u_0)^p+S_{\beta}(F)^p)$. Choose $\epsilon$ so
that $C_1C_2\epsilon<1/2$ (this allows the term
$C_1C_2\epsilon S_{\beta}(F)^p$ to be incorporated into the right-hand side). It
follows that
\begin{equation*}
\int_{\partial D} S_\beta(F)^pd\,\sigma\le 2C(\epsilon)
\int_{\partial D}
\left(\widetilde{N}_\alpha(F)^p+\widetilde{N}_\alpha(\delta
\abs{\grad F})^p\right) d\sigma + \int_{\partial
D}S_{\beta}(u_0)^p\,d\sigma.
\end{equation*}
\end{proof}

\end{document}